\newtheorem{theorem}{Theorem}
\newtheorem{lemma}[theorem]{Lemma}
\newtheorem{corollary}[theorem]{Corollary}
\newtheorem{conjecture}{Conjecture}
\author[Yiqiao Wang et. al]{Yiqiao Wang\affiliationmark{1}\thanks{Research supported partially by NSFC (Nos.\,12071048; 12161141006)}
  \and Ning Song\affiliationmark{2}
  \and Jianfeng Wang\affiliationmark{2}\thanks{Research supported partially by NSFC (Nos.\,11971274)}
  \and Weifan Wang\affiliationmark{2}\thanks{Research supported partially by NSFC (Nos.\,12031018; 12226303); Corresponding author. Email: wwf@zjnu.cn}}
\title[The strong  chromatic index of $1$-planar graphs]{The strong chromatic index of $1$-planar graphs}
\affiliation{
  Faculty of Science, Beijing University of Technology, Beijing, China\\
  School of Mathematics and Statistics, Shandong University of Technology, Zibo, China}
\keywords{Strong edge coloring, strong chromatic index, maximum average degree, 1-planar graph, matching.}
\begin{document}
\publicationdata{vol.25.1 }{2023}{11}{10.46298/dmtcs.9631}{2022-05-31; 2022-05-31; 2022-11-24}{2023-03-08}
\maketitle
\begin{abstract}
  The chromatic index $\chi'(G)$ of a graph $G$ is the smallest $k$ for which $G$
admits an edge $k$-coloring such that any two adjacent edges have distinct colors. The strong chromatic index $\chi'_s(G)$ of $G$ is the smallest $k$ such that
$G$ has an edge $k$-coloring  with the condition that any two edges at distance at most
2 receive distinct colors. A graph is  1-planar if it can be drawn in the plane so that each edge is
crossed by at most one other edge.

  In this paper, we show that every graph
$G$ with maximum average degree $\bar{d}(G)$ has
 $\chi'_{s}(G)\le (2\bar{d}(G)-1)\chi'(G)$.  As a corollary, we prove that every 1-planar graph $G$
 with maximum degree $\Delta$ has  $\chi'_{\rm s}(G)\le 14\Delta$,
which improves a result, due to Bensmail et al., which says that $\chi'_{\rm s}(G)\le 24\Delta$ if $\Delta\ge 56$. 
\end{abstract}

\section{Introduction}
\label{sec:in}
Only simple graphs are considered in this paper unless otherwise stated.
Let $G$ be a graph with vertex set $V(G)$, edge set $E(G)$, minimum degree $\delta(G)$, and
maximum degree $\Delta(G)$ (for short, $\Delta$), respectively.
A vertex $v$ is called a $k$-{\em vertex} if the degree $d_G(v)$  of $v$ is $k$.
The {\em girth} $g(G)$  of a  graph $G$ is  the length of a shortest cycle in $G$.
The {\em maximum average degree $\bar{d}(G)$} of a graph $G$ is defined as follows:
$$\bar{d}(G)=\max\{\frac {2|E(H)|} {|V(H)|}\ |\ H \subseteq G\}.$$

 A {\em proper edge $k$-coloring} of a graph $G$ is a mapping $\phi: E(G)
\to \{1,2,\ldots ,k\}$ such that $\phi(e)\ne \phi(e')$ for any two
adjacent edges $e$ and $e'$.  The {\em chromatic index} $\chi'(G)$
of $G$ is the smallest $k$ such that $G$ has a proper
edge $k$-coloring.  The coloring $\phi$ is called {\em strong} if any two edges at distance at most two get distinct colors.
Equivalently, each color class  is an induced matching.
 The {\em strong chromatic index}, denoted $\chi'_{\rm s}(G)$, of $G$  is the smallest integer $k$ such that $G$ has a strong edge $k$-coloring.

Strong edge coloring of  graphs was introduced by
Fouquet and Jolivet \cite{fou}.
It holds trivially that $\chi'_s(G)\ge \chi'(G)\ge \Delta$ for any graph $G$.
In 1985, during a seminar in Prague,
Erd\H{o}s and Ne${\rm \breve{s}}$et${\rm \breve{r}}$il
put forward the following conjecture:
\begin{conjecture}
For a simple graph $G$,
\[
\chi'_s(G) \le \left\{
\begin{array}{ll}
  1.25 \Delta^2,
 & \mbox{{\rm if}  $\Delta$ {\rm is even;}}\\
 1.25\Delta^2-0.5\Delta+0.25,
 & \mbox{{\rm if}  $\Delta$ {\rm is odd.}}
\end{array}\right.
\]
\end{conjecture}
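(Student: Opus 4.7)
The statement is the Erd\H{o}s--Ne\v{s}et\v{r}il conjecture, which is famously open for general simple graphs. The best known general upper bound is of the form $\chi'_s(G)\le c\Delta^2$ with $c$ slowly approaching the conjectured $1.25$ through a long line of probabilistic arguments (Molloy--Reed, Bruhn--Joos, Bonamy--Perrett--Postle, Hurley--de Joannis de Verclos--Kang); the target constant $1.25$ is witnessed on blow-ups of $C_5$, which are believed to be essentially extremal. Any plausible attack must therefore combine a refined probabilistic argument with structural information about these conjectured extremal configurations, so my proposal is necessarily a program rather than a proof.

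My plan is to split off the small cases $\Delta\le 3$, which are already settled (Andersen's theorem gives $\chi'_s(G)\le 10$ for subcubic $G$), and to treat large $\Delta$ via the semi-random (``nibble'') method applied to the square of the line graph $L(G)^2$. I would iteratively select a small random subset of uncoloured edges, assign each a uniformly random tentative colour from its current list of available colours, and keep the assignment precisely at edges whose colour conflicts with no other edge at strong distance at most $2$. Using Talagrand's inequality together with iterated applications of the Lov\'asz Local Lemma, I would track for each edge $e$ the number of available colours and the number of uncoloured edges in the strong neighbourhood of $e$; provided the latter shrinks strictly faster than the former, the process terminates on a sparse residual graph that can be finished greedily, ideally with at most $1.25\Delta^2$ colours in total.

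The main obstacle is the leading constant. A plain nibble yields only $\chi'_s(G)\le (e+o(1))\Delta^2$, and shaving this down to $1.25$ requires exploiting local sparsity, since the strong neighbourhood of an edge contains roughly $2\Delta^2$ potential conflicts but only a small fraction can be simultaneously realized in sparse regions. The decisive step would be a dichotomy: if the strong neighbourhood of some edge is close to the extremal pair-of-cliques configuration, then a direct structural colouring should handle the local region; otherwise the strong-conflict graph has noticeably fewer than $2\Delta^2$ edges locally, and a sharper Local Lemma bound would kick in.

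The hard part, and the reason the conjecture has resisted for nearly forty years, is gluing these two regimes together without losing a constant factor: quantifying exactly how near-extremal a local structure must be before it is forced to \emph{be} extremal, and then absorbing the genuinely extremal pieces into a global colouring. This is the step at which every known approach stalls, and I would expect a successful proof to require either a new local-extension lemma for strong edge colourings or a genuinely new absorbing technique adapted to $L(G)^2$.
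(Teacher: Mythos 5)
The statement you were asked about is Conjecture~1, the Erd\H{o}s--Ne\v{s}et\v{r}il conjecture, and the paper does not prove it --- it is quoted only as motivation, with the surrounding text explicitly recording that the best known general bounds are $1.998\Delta^2$, $1.93\Delta^2$, $1.835\Delta^2$, and $1.772\Delta^2$, that the conjecture is confirmed only for $\Delta=3$, and that even the $\Delta=4$ case ($20$ versus the known $21$) remains open. So there is no proof in the paper for you to match, and your submission, by your own admission, is not a proof either: it is an accurate survey of the literature (the $C_5$ blow-up as the conjectured extremal example, the nibble on $L(G)^2$, Talagrand plus the Local Lemma, the local-sparsity dichotomy) together with a research program whose decisive steps --- the stability statement quantifying near-extremality and the absorption of extremal local pieces --- are precisely the parts you concede are missing. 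As a proof attempt it therefore has an unfillable gap at its core; as a description of why the problem is hard it is essentially correct.

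One factual remark worth fixing: Andersen's theorem (and independently Hor\'ak, He, and Trotter) shows $\chi'_s(G)\le 10$ for subcubic graphs, which \emph{exactly} matches the conjectured bound $1.25\cdot 9-0.5\cdot 3+0.25=10$ for $\Delta=3$, so the small cases you propose to ``split off'' are not merely settled but settled tightly; the first genuinely open case is already $\Delta=4$. If your goal was instead to engage with what this paper actually proves, the relevant statements are Theorem~\ref{them3} ($\chi'_{\rm s}(G)\le(2\bar d(G)-1)\chi'(G)$ via contracting the colour classes of a proper edge colouring and vertex-colouring the contracted graphs) and its application to $1$-planar graphs, which live in the linear-in-$\Delta$ regime for sparse graphs and are methodologically unrelated to the quadratic conjecture above.
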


Erd\H{o}s and Ne\v{s}et\v{r}il
 provided a construction showing that Conjecture 1 is tight if it were true.
Using probabilistic method,  Molloy and Reed \cite{mol} showed that
$\chi'_s(G)\le 1.998\Delta^2$ for any graph $G$  with sufficiently large $\Delta$.
This result was gradually improved to that
$\chi'_s(G)\le 1.93\Delta^2$ in \cite{bru}, to that $\chi'_s(G)\le 1.835\Delta^2$ in \cite{bon},
and to that $\chi'_s(G)\le 1.772\Delta^2$ in \cite{hur}.  Andersen \cite{and} and independently Hor$\acute{\rm a}$k et al.\,\cite{hor} confirmed Conjecture 1 for graphs with $\Delta=3$. If $\Delta=4$, then   Conjecture 1 asserts that
$\chi'_s(G)\le 20$. However, the  currently best known  upper bound   is  21 for this case, see  \cite{huang}.

A graph $G$ is  $d$-{\em degenerate} if each subgraph  of $G$ contains a vertex of degree at most
$d$. Chang and Narayanan \cite{chan1} showed that $\chi'_{\rm s}(G)\le 10\Delta-10$ for a 2-degenerate graph $G$.
For a general $k$-degenerate graph $G$, it was shown that $\chi'_{\rm s}(G)\le  (4k-2)\Delta-k(2k-1)+1$ in
 \cite{yu}, $\chi'_{\rm s}(G)\le  (4k-1)\Delta-k(2k+1)+1$ in  \cite{dbs},
 and $\chi'_{\rm s}(G)\le  (4k-2)\Delta- 2k^2+1$ in \cite{wan}.

Suppose that $G$ is a planar graph. Faudree et al.\,\cite{fau} first gave an elegant  proof for the result that $\chi'_{\rm s}(G)\le 4\Delta+4$,
and constructed a class of planar graphs $G$ with $\Delta\ge 2$ such that $\chi'_{\rm s}(G)=4\Delta-4$.
For the class of special planar graphs, some better results have been obtained.
It was shown in  \cite{hud}  that  $\chi'_{\rm s}(G)\le 3\Delta$ if $g(G)\ge 7$,
and in \cite{bh} that $\chi'_{\rm s}(G)\le 3\Delta+1$ if $g(G)\ge 6$.
Kostochka et al.\,\cite{kos} showed that if $\Delta=3$ then $\chi'_{\rm s}(G)\le 9$.
Hocquard et al.\,\cite{hoc} showed that every outerplanar graph $G$ with $\Delta\ge 3$ has $\chi'_{\rm s}(G)\le 3\Delta-3$.
Wang et al.\,\cite{yiqiao} showed that every $K_4$-minor-free graph $G$ with $\Delta\ge 3$ has $\chi'_{\rm s}(G)\le 3\Delta-2$.
Moreover, all upper bounds $9, 3\Delta-3,3\Delta-2$ given in the above results are best possible.

A $1$-{\em planar graph} is a graph that can be drawn in the plane such that each edge crosses at most
one other edge. A number of interesting results about structures and parameters of 1-planar graphs
have been obtained in recent years. Fabrici and Madaras \cite{fa} proved that every 1-planar graph $G$ has
$|E(G)|\le 4|V(G)|-8$, which implies that $\delta(G)\le 7$, and constructed   7-regular 1-planar graphs.
Borodin \cite{borodin} showed  that every 1-planar graph is vertex 6-colorable.
Wang and Lih \cite{wang} proved that the vertex-face total graph of a plane graph, which is a class of special 1-planar graphs, is vertex 7-choosable.  Zhang and Wu \cite{zhang} studied the edge coloring of 1-planar graphs and showed that
every 1-planar graph $G$ with $\Delta\ge 10$ satisfies $\chi'(G)=\Delta$.

Bensmail et al.\,\cite{ben} investigated the strong edge coloring of 1-planar graphs and
proved that every 1-planar graph $G$ has $\chi'_{\rm s}(G)\le  \max\{18\Delta+330, 24\Delta-6\}$.
This implies that if $\Delta\ge 56$, then $\chi'_{\rm s}(G)\le 24\Delta-6$.
In this paper we will improve this result by showing that every 1-planar graph $G$ has $\chi'_{\rm s}(G)\le  14\Delta$.
To obtain this result, we establish a connection between the strong chromatic index and maximum average degree of a graph.
More precisely, we will show that $\chi'_{\rm s}(G)\le (2\bar{d}(G)-1)(\Delta+1)$ for any simple graph $G$.

\section{Preliminary}

In this section, we  summarize  some known results, which will be used later.

A  {\em proper $k$-coloring} of a graph $G$ is a mapping $\phi: V(G)\to \{1,2,\ldots,k\}$
such that $\phi(u)\ne \phi(v)$ for any two adjacent vertices $u$ and $v$.
The {\em chromatic number}, denoted $\chi(G)$, of $G$ is the least $k$ such that $G$ has a proper $k$-coloring.

Using a greedy algorithm, the following conclusion holds automatically.

\begin{lemma}\label{vertex-cloring-1}
If $G$ is a $d$-degenerate graph, then $\chi(G)\le d+1$.
\end{lemma}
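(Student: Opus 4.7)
The plan is to prove the statement by a standard greedy coloring argument, which is precisely the reason the preceding sentence claims the conclusion holds ``automatically''. The first step is to produce a vertex ordering tailored to greedy coloring. By the definition of $d$-degeneracy, every nonempty subgraph of $G$ contains a vertex of degree at most $d$. Since the class of $d$-degenerate graphs is closed under taking induced subgraphs, I can peel off such vertices one at a time: pick $v_n$ to be a vertex of $G$ of degree at most $d$; then pick $v_{n-1}$ to be a vertex of $G - v_n$ of degree at most $d$; in general, let $v_i$ be a vertex of $G - \{v_{i+1},\ldots,v_n\}$ of degree at most $d$. The resulting ordering $v_1,v_2,\ldots,v_n$ has the crucial property that each $v_i$ has at most $d$ neighbors among $\{v_{i+1},\ldots,v_n\}$.

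The second step is the greedy coloring itself. I color the vertices in the reverse order $v_n,v_{n-1},\ldots,v_1$ using the palette $\{1,2,\ldots,d+1\}$. When it is time to color $v_i$, its already-colored neighbors are contained in $\{v_{i+1},\ldots,v_n\}$ and therefore number at most $d$, so they forbid at most $d$ of the $d+1$ available colors. Hence some color in the palette remains available for $v_i$, and the coloring extends. Iterating down to $v_1$, we obtain a proper $(d+1)$-coloring, which yields $\chi(G)\le d+1$.

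There is no real obstacle: the only non-mechanical point is the hereditariness of $d$-degeneracy, which guarantees that the inductive removal procedure can always find another vertex of degree at most $d$ in whatever remains. Everything else is a direct verification, so the proof is essentially a one-line invocation of the greedy paradigm on a degeneracy ordering.
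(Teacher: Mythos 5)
Your proof is correct and is exactly the standard greedy argument on a degeneracy ordering that the paper invokes when it says the lemma "holds automatically" by a greedy algorithm (the paper gives no further detail). Nothing is missing; your explicit construction of the ordering and the $(d+1)$-color greedy step fill in precisely what the paper leaves implicit.
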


As stated before, Borodin \cite{borodin} showed the following sharp result:

\begin{theorem}\label{vertex-coloring-2} {\rm (\cite{borodin})}
 Every $1$-planar graph $G$ has $\chi(G)\le 6$.
\end{theorem}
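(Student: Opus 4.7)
The plan is to argue by contradiction: let $G$ be a minimum counterexample, meaning a $1$-planar graph with $\chi(G)\geq 7$ that minimizes $|V(G)|+|E(G)|$ among all such graphs. I would first extract degree information from minimality. If some vertex $v$ has $d_G(v)\le 5$, then by minimality $G-v$ admits a proper $6$-coloring, and since $v$ has at most five neighbors there is always a free color in $\{1,\ldots,6\}$ for $v$; this contradicts the assumption that $\chi(G)\geq 7$. Hence $\delta(G)\ge 6$.

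Next I would invoke the structural/sparsity bound for $1$-planar graphs, namely $|E(G)|\le 4|V(G)|-8$, due to Fabrici and Madaras. Since $\delta(G)\ge 6$, this already forces a nontrivial tension: the average degree $\bar d(G)<8$, so some vertex has degree $6$ or $7$. To squeeze out more, I would fix a $1$-planar drawing of $G$ and pass to its \emph{planarization} $G^{\times}$ obtained by turning each crossing point into a new degree-$4$ vertex. Then $G^{\times}$ is a plane graph to which Euler's formula applies, and one can classify its faces (true faces versus "crossing corners") according to the incident real/crossing vertices.

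The heart of the proof would be a \emph{discharging argument} on $G^{\times}$. I would assign initial charges to vertices and faces by a formula like $\mu(x)=d(x)-4$ (or a suitable variant compatible with $|E|\le 4|V|-8$), so that Euler's formula forces the total charge to be negative. Then I would design local redistribution rules sending charge from high-degree real vertices and large faces to low-degree real vertices (of degree $6$ or $7$) and to crossing vertices, and show that after redistribution every element has nonnegative charge, yielding the contradiction. Standard reducible configurations one would aim to establish include: a $6$-vertex with too many $6$-neighbors, a $6$-vertex lying in a light crossing, or a small separating structure, each reducible by a Kempe-chain recoloring of $G$ minus the configuration.

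The main obstacle, as in all discharging proofs for $1$-planar graphs, is two-fold. First, the reducibility step must handle $6$- and $7$-vertices whose neighborhoods may contain crossing partners of high degree, so the Kempe-chain exchanges have to be verified not to propagate uncontrollably through the $6$ color classes---this is where subtle $K_{1,3}$-type or two-adjacent-$6$-vertex configurations are typically needed. Second, designing the discharging rules on the planarization so that both crossing corners and true faces end up nonnegative requires a careful local analysis of the possible face types around low-degree vertices. Once both ingredients are in place, summing the final charges against Euler's formula yields the contradiction and completes the proof.
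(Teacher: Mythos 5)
This theorem is not proved in the paper at all: it is quoted from Borodin's 1995 paper ``A new proof of the $6$ color theorem,'' so there is no in-paper argument to compare against. Your proposal does correctly identify the standard framing (minimal counterexample, hence $\delta(G)\ge 6$; the bound $|E(G)|\le 4|V(G)|-8$ forcing a vertex of degree $6$ or $7$; discharging on the planarization), and this is indeed the general shape of the known proofs. The trivial reduction of a vertex of degree at most $5$ is correct as written.

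However, as a proof the proposal has a genuine gap: everything after ``$\delta(G)\ge 6$'' is a plan rather than an argument. You never specify the discharging rules, never exhibit a concrete unavoidable set of configurations, and never verify that any configuration beyond a $\le 5$-vertex is reducible. Those three items \emph{are} the theorem --- the entire difficulty of Borodin's result lies in (i) finding reducible configurations around $6$- and $7$-vertices (reducibility here typically proceeds by deleting the low-degree vertex and identifying two of its nonadjacent neighbors so that the vertex sees at most five colors, and one must check that the identification preserves $1$-planarity, which is delicate because the neighbors may be joined only through a crossing), and (ii) proving via discharging that one such configuration must occur. Your text explicitly defers both (``one would aim to establish,'' ``once both ingredients are in place''), and the Kempe-chain remark is a red flag: Kempe exchanges are not how the known proofs handle these configurations, and controlling Kempe chains through six color classes in a nonplanar setting is exactly the kind of step that tends to fail. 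As it stands the proposal is a correct description of a proof strategy, not a proof.
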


Given a graph $G$, it is trivial that $\chi'(G)\ge \Delta$. On the other hand,
the celebrated Vizing Theorem \cite{vizing} asserts:

\begin{theorem}\label{edge-coloring-1} {\rm (\cite{vizing})}
 Every simple graph $G$ has $ \chi'(G)\le \Delta+1$.
\end{theorem}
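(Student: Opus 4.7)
The plan is to prove Vizing's theorem by induction on $|E(G)|$, with the empty graph as the trivial base case. For the inductive step, I will remove an arbitrary edge $e = xy_0$ and apply the inductive hypothesis to $G - e$, obtaining a proper edge coloring $\phi$ using at most $\Delta + 1$ colors (since $\Delta(G - e) \le \Delta$). Call a color \emph{free at $v$} when it appears on no edge incident to $v$; because $d_{G - e}(v) \le \Delta$ and $\Delta + 1$ colors are available, every vertex has at least one free color. If some color is free simultaneously at $x$ and at $y_0$, I assign it to $e$ and finish. The main work is in the case where no color is free at both endpoints.

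In that case I construct a \emph{Vizing fan} at $x$: starting from a color $c_0$ free at $y_0$, I inductively let $y_{i+1}$ be the neighbor of $x$ with $\phi(xy_{i+1}) = c_i$ (when such a neighbor exists), and choose $c_{i+1}$ to be a color free at $y_{i+1}$. If at some step $c_i$ happens to be free at $x$ as well, I perform a \emph{shift}: recolor $xy_j$ by $c_j$ for $j = 0, 1, \ldots, i$. A direct check (each $c_j$ is free at $y_j$, and the only potential conflict at $x$ involves an edge that is simultaneously recolored) shows this yields a valid $(\Delta + 1)$-edge coloring of $G$. Since $x$ has at most $\Delta$ neighbors, the only alternative is that the fan repeats a color, say $c_k = c_j$ with $j < k$, before any shift becomes available.

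In the repetition case I pick a color $\beta$ free at $x$ and perform a Kempe swap of colors $\beta$ and $c_k$ on the bichromatic component $H$ (in the subgraph spanned by edges of these two colors) containing $x$, which is a path with $x$ as one endpoint. After the swap $c_k$ is free at $x$, and a case analysis on where the other endpoint of $H$ lies shows that $c_k$ remains free at either $y_j$ or $y_k$; applying the shift along the corresponding initial segment of the fan then completes the coloring. The main obstacle is precisely this bookkeeping: I must argue that the other endpoint of $H$ cannot coincide with \emph{both} $y_j$ and $y_k$ at once, so at least one of the two candidate shifts remains legal after the swap. This rests on the simple fact that a path has two distinct endpoints, and is the crux of Vizing's original argument.
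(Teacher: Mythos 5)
The paper does not actually prove this statement; it is quoted as Vizing's theorem with a citation to the literature, so there is no in-paper argument to compare against. Your sketch follows the classical fan-and-Kempe-chain proof, which is the right framework: the induction on edges, the existence of a free colour at every vertex, the fan construction, and the basic shift (when some $c_i$ is free at $x$) are all sound.

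The gap is in the repetition case. After you swap the colours $\beta$ and $c_k$ on the component $H$ containing $x$, the edge $xy_{j+1}$ --- the unique $c_k$-coloured edge at $x$, hence the first edge of the path $H$ --- changes colour from $c_k$ to $\beta$. This has two consequences your sketch does not account for. First, the fan is no longer a fan beyond $y_j$, since $xy_{j+1}$ is no longer coloured $c_j$. Second, and fatally for the branch in which the other endpoint of $H$ is $y_j$: you propose to shift the whole fan and give $xy_k$ the colour $c_k$, but the full shift assigns $c_j$ to $xy_j$ and $c_k$ to $xy_k$, and $c_j=c_k$, so two edges at $x$ receive the same colour. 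Hence ``at least one of the two candidate shifts remains legal after the swap'' is false: only the shift to $y_j$ is ever legal after your swap, and the case where $H$ terminates at $y_j$ is left unhandled. The standard repair reverses the order of operations and swaps the other component: first perform the downshift to $y_t$ (for $t=j$ or $t=k$), obtaining a proper colouring of $G-xy_t$ in which $\beta$ is still free at $x$ and $c_k$ is still free at $y_t$; then swap the $\beta/c_k$-component containing $y_t$, chosen so that it does not contain $x$, which makes $\beta$ free at both $x$ and $y_t$; finally colour $xy_t$ with $\beta$, not with $c_k$. Even then one must verify that the downshift does not alter the $\beta/c_k$-subgraph near $x$ in a way that puts $x$ into the component of $y_t$ --- this is the delicate point that every correct write-up of Vizing's theorem has to address, and it is missing from your sketch.
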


A simple graph $G$ is of {\em Class I} if
$\chi'(G)=\Delta$, and of {\em Class II} if $\chi'(G)=\Delta+1$.
As early as in 1916, K$\ddot{\rm o}$nig \cite{konig} showed that  bipartite graphs are of Class I.

\begin{theorem}\label{konig} {\rm (\cite{konig})}
 If $G$ is a bipartite graph, then  $\chi'(G)=\Delta$.
\end{theorem}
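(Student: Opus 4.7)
The lower bound $\chi'(G)\ge\Delta$ is immediate, since the $\Delta$ edges at a maximum-degree vertex pairwise share an endpoint and hence need pairwise distinct colors. For the upper bound, my plan is to extract $\Delta$ edge-disjoint perfect matchings from a $\Delta$-regular bipartite supergraph of $G$ and use each matching as a color class.

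\emph{Step 1 (regular embedding).} I would embed $G$ as a spanning subgraph of a $\Delta$-regular bipartite multigraph $H$ with bipartition $(X,Y)$ satisfying $|X|=|Y|$. After padding the smaller partite set with isolated vertices to equalize the two sides, I would repeatedly add an edge $xy$ joining some $x\in X$ with $d_H(x)<\Delta$ to some $y\in Y$ with $d_H(y)<\Delta$, allowing multi-edges when necessary. Such a pair always exists while $H$ is not yet regular: the identity $\sum_{x\in X}d_H(x)=\sum_{y\in Y}d_H(y)$ together with $|X|=|Y|$ forces a deficiency on one side to be matched by a deficiency on the other.

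\emph{Step 2 (Hall's theorem and iteration).} For any $S\subseteq X$, counting edges leaving $S$ in the $\Delta$-regular graph $H$ yields $\Delta|S|\le\Delta|N(S)|$, so $|N(S)|\ge|S|$ and Hall's condition holds; hence $H$ admits a perfect matching $M_1$. Removing $M_1$ leaves a $(\Delta-1)$-regular bipartite multigraph, and iterating produces edge-disjoint perfect matchings $M_1,\dots,M_\Delta$ partitioning $E(H)$. Assigning color $i$ to $M_i$ gives a proper edge $\Delta$-coloring of $H$, whose restriction to $E(G)$ is the required coloring.

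The step I expect to require most care is the regular embedding: one cannot in general keep $H$ simple, because the last deficiency-fixing edges may duplicate existing ones, but this is harmless since Hall's theorem and the matching-decomposition argument both apply verbatim to bipartite multigraphs. An alternative route that stays within simple graphs is an induction on $|E(G)|$ using a Vizing-style Kempe chain exchange, leaning on the parity observation that an $\alpha\beta$-alternating path between the endpoints of the uncolored edge would have even length, contradicting bipartiteness.
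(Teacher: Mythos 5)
Your proof is correct. The paper does not prove this statement at all---it is quoted as K\H{o}nig's classical 1916 theorem with a citation---so there is no in-paper argument to compare against; your route (pad to equal part sizes, greedily add edges to reach a $\Delta$-regular bipartite multigraph, peel off $\Delta$ perfect matchings via Hall's condition, and restrict the resulting coloring to $E(G)$) is one of the standard proofs, and you correctly flag the one genuine subtlety, namely that the regularization step may force multi-edges, which is harmless because Hall's theorem and the matching decomposition go through for bipartite multigraphs.
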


  Sanders and Zhao \cite{sander}, and Zhang \cite{zlm} independently,  showed that
  planar graphs with maximum degree at least seven are of Class I.

\begin{theorem}\label{edge-coloring-2}{\rm (\cite{sander,zlm})}
Every planar graph $G$ with $\Delta\ge 7$ has $\chi'(G)=\Delta$.
\end{theorem}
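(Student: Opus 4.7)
The plan is to proceed by contradiction via the standard discharging method for edge coloring of planar graphs. Suppose the theorem fails and let $G$ be a counterexample with $\Delta := \Delta(G) \geq 7$ and $\chi'(G) = \Delta + 1$, chosen to minimize $|E(G)|$. Minimality makes $G$ a $\Delta$-edge-critical graph: the deletion of any edge drops $\chi'$. This is what unlocks the structural lemmas I will need.

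The first step is to extract local restrictions using Vizing's Adjacency Lemma (VAL): in any $\Delta$-edge-critical graph, for every edge $uv$ with $d(u)=k$, the vertex $v$ has at least $\Delta-k+1$ neighbors, other than $u$, of degree $\Delta$. Immediate consequences, specialized to $\Delta\ge 7$, include $\delta(G)\ge 2$; both neighbors of a $2$-vertex are $\Delta$-vertices; a $3$-vertex has at least $\Delta-2\ge 5$ neighbors of maximum degree; and in general a $k$-vertex is locally surrounded by $\Delta$-vertices. I would also invoke the sharper critical-graph adjacency lemmas (of the type used by Sanders--Zhao and by Zhang) bounding how many small-degree neighbors a given vertex may have.

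The second step is the discharging. Fix a plane embedding of $G$ and set the initial charge $\mu(v)=d(v)-4$ for each vertex $v$ and $\mu(f)=d(f)-4$ for each face $f$. Euler's formula gives $\sum_{x\in V(G)\cup F(G)}\mu(x)=-8$. The positive charge lives on $\Delta$-vertices (with $\mu\ge 3$) and on long faces, while the deficit concentrates on $2,3,4$-vertices and on triangles. I would design transfer rules of the following flavor: each $\Delta$-vertex donates a fixed amount across each edge to an adjacent $2$-, $3$-, or $4$-vertex, and each face of length $\ge 5$ distributes its surplus among incident triangles and incident low-degree vertices. The target is that after redistribution the new charge $\mu^{*}(x)$ is nonnegative for every vertex and every face, contradicting the total $-8$.

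The main obstacle is calibrating these rules so that VAL supplies enough donors to neutralize every deficit, and this is precisely where the hypothesis $\Delta\ge 7$ is used: for smaller $\Delta$, VAL does not force enough $\Delta$-neighbors around a low-degree vertex to cover its charge deficit of $4-d(v)$. The trickiest configurations are clusters of $2$-, $3$-, $4$-vertices sharing triangular faces, where donors must be shared between vertex deficits and face deficits; handling them typically requires refining VAL via a multifan or Kierstead-path argument specific to the offending configuration. Once the rules are fixed, the final check that $\mu^{*}(x)\ge 0$ for each $x$ is a routine but lengthy case analysis, organized by vertex degree and by face length.
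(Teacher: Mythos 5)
The paper does not prove this statement at all; it is quoted verbatim from Sanders--Zhao and Zhang as a known theorem, so the only question is whether your attempt stands on its own. It does not: what you have written is an accurate description of the strategy those authors use (minimal counterexample, Vizing's Adjacency Lemma, discharging with charges $d(v)-4$ and $d(f)-4$), but every step carrying actual mathematical content is deferred. You never state the discharging rules, never verify that the final charges are nonnegative, and never prove --- or even precisely state --- the ``sharper critical-graph adjacency lemmas'' that you correctly observe are needed. A discharging proof \emph{is} its rules together with the verification; an argument that says ``I would design transfer rules of the following flavor'' and ``the final check \ldots is a routine but lengthy case analysis'' has not yet begun.

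The gap is not merely one of bookkeeping. For $\Delta\ge 8$ the theorem is older (Vizing) and does follow from VAL plus a short counting argument, but the entire difficulty of the Sanders--Zhao/Zhang theorem is the case $\Delta=7$, where VAL alone provably does not supply enough maximum-degree donors around the deficient configurations. Both published proofs had to establish genuinely new structural lemmas about edge-critical graphs, obtained by multifan/Kempe-chain arguments tailored to specific clusters of low-degree vertices on common triangles, before any discharging could close; calling the remaining work ``routine'' misrepresents where the theorem's difficulty lives. Your sketch acknowledges this obstacle in passing but supplies neither the lemmas nor the list of configurations they exclude, so the argument cannot be checked or completed as written. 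To repair it you would either have to carry out that analysis in full or simply cite the theorem, as the paper does.
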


Another result, due to Zhang and Wu \cite{zhang}, claims that 1-planar graphs with maximum degree at least ten are of Class I.

\begin{theorem}\label{edge-coloring-3} {\rm (\cite{zhang})}
Every $1$-planar graph $G$ with $\Delta\ge 10$ has $\chi'(G)=\Delta$.
\end{theorem}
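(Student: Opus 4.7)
The plan is a standard minimum-counterexample argument combined with a discharging scheme on a 1-planar drawing. Suppose for contradiction that the claim fails, and let $G$ be a 1-planar graph with $\Delta\ge 10$ and $\chi'(G)=\Delta+1$ chosen so that $|V(G)|+|E(G)|$ is minimum. Then every proper subgraph $H\subsetneq G$ satisfies $\chi'(H)\le \Delta(H)$, so $G$ is \emph{edge-critical} (in particular connected and without pendant-like reducible substructures). My main coloring tool will be Vizing's Adjacency Lemma and its refinements by Fournier and Sanders--Zhao: for any edge $uv$ of a critical Class II graph, the vertex $v$ has at least $\max\{2,\Delta-d(u)+1\}$ neighbours of degree $\Delta$, and short odd cycles on small-degree vertices are forbidden. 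These lemmas will be used to rule out a catalogue of small local configurations.

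Next, I would fix a 1-planar embedding of $G$ and form its \emph{planarization} $G^\times$ by replacing each pair of crossing edges $xy$ and $uv$ by a false $4$-vertex. The resulting plane graph $G^\times$ is simple (after a routine reduction showing that in a minimum counterexample no two false vertices are adjacent and no triangle of $G^\times$ contains two false vertices). Applying Euler's formula to $G^\times$ yields, after separating the contribution of false vertices, the standard inequality
\[
\sum_{v\in V(G)}\bigl(d(v)-4\bigr)\;+\;\sum_{f\in F(G^\times)}\bigl(\ell(f)-4\bigr)\;<\;0,
\]
where $\ell(f)$ is the length of face $f$ in $G^\times$. This is the budget that the discharging will redistribute.

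The core step is to compile a list of \emph{reducible} configurations that cannot occur in $G$. Typical entries include: a $k$-vertex adjacent to a $k'$-vertex with $k+k'\le \Delta+1$; a $7$-vertex incident to two crossings whose crossing partners have small degree; and various local patterns obtained by peeling off one edge, colouring the smaller graph by minimality, and extending the colouring using a Vizing/Kempe chain argument around a carefully chosen small-degree vertex. Since $\Delta\ge 10$, the threshold $k+k'\le 11$ combined with the 1-planarity degree bound $\delta(G)\le 7$ leaves a very restricted neighbourhood for each small vertex. I would then define initial charges $\mu(v)=d(v)-4$ for true vertices, $\mu(w)=-2$ for false $4$-vertices, and $\mu(f)=\ell(f)-4$ for faces, and set discharging rules that send fixed fractions of charge from vertices of degree $\ge 8$ and from long faces to their small-degree true neighbours and incident short faces. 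The goal is to show that the absence of every reducible configuration forces the final charge of every object to be nonnegative, contradicting the strictly negative total.

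The main obstacle I expect is the bookkeeping around $7$-vertices and around false vertices: a $7$-vertex may be incident to several crossings, and the rules must ensure that charge received via crossed edges is not double-counted while still making up the deficit $7-4=3$ of each $7$-vertex without over-draining its high-degree neighbours (who have surplus only $\Delta-4\ge 6$ but many potential recipients). Tuning the amounts so that every case around a $7$-vertex balances, under the full list of reducible configurations excluded via Vizing's adjacency lemma, will be the technical crux; everything else is routine verification.
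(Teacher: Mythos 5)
First, note that the paper does not prove this statement at all: Theorem~\ref{edge-coloring-3} is quoted from Zhang and Wu \cite{zhang}, so there is no internal proof to match. Your outline does follow the same broad strategy as the cited source (take a minimum Class~II counterexample, exploit Vizing-type adjacency lemmas to exclude local configurations, planarize the 1-planar drawing by turning crossings into false $4$-vertices, and discharge against Euler's formula), so the plan is pointed in the right direction.

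However, as a proof it has a genuine gap: every piece of technical content is deferred. You never state the discharging rules, never give the actual list of reducible configurations, and never verify that a configuration-free counterexample forces all final charges to be nonnegative. For this theorem that is not ``routine verification'' --- it is the entire proof, and it occupies essentially all of the Zhang--Wu paper. The places you yourself flag as the crux (degree-$7$ vertices with deficit $3$, false vertices, avoiding double-counting across crossed edges) are exactly where such arguments succeed or fail, and nothing in the sketch shows they can be resolved; indeed the bound $\Delta\ge 10$ has to emerge from that arithmetic, and your outline gives no indication of where the threshold $10$ (rather than $9$ or $11$) would come from. There is also a bookkeeping inconsistency: after writing the Euler identity in which false $4$-vertices contribute $\ell(w)-4=0$, you reassign them initial charge $-2$, which only deepens the deficit you must recover and is not reconciled with the displayed inequality. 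As it stands the proposal is a plausible research plan, not a proof.
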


 Zhou \cite{zhou} observed an interesting relation between the degeneracy and chromatic index of a graph.

\begin{theorem}\label{zhou} {\rm (\cite{zhou})}
If $G$ is a $k$-degenerate graph with $\Delta\ge 2k$, then  $\chi'(G)=\Delta$.
\end{theorem}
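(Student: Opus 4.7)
The plan is to argue by contradiction. By Theorem \ref{edge-coloring-1} we have $\chi'(G)\le\Delta+1$, so it suffices to rule out $\chi'(G)=\Delta+1$. Assume $G$ is $k$-degenerate with $\Delta(G)\ge 2k$ and $\chi'(G)=\Delta+1$. First I would pass to a $\Delta$-critical subgraph by choosing $H\subseteq G$ edge-minimal with $\chi'(H)>\Delta$; a short verification using Theorem \ref{edge-coloring-1} then gives $\Delta(H)=\Delta$, $\chi'(H)=\Delta+1$, and $\chi'(H-e)\le\Delta$ for every edge $e$ of $H$. Since $k$-degeneracy is hereditary, $H$ remains $k$-degenerate. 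The goal is to exhibit tension between this $k$-degeneracy (which forces low-degree vertices) and the rigid local structure that $\Delta$-criticality imposes.

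Next I would partition $V(H)=L\cup R$, where $L=\{v\in V(H):d_H(v)\le k\}$ and $R=V(H)\setminus L$. An elementary ``tight edge'' lemma for critical graphs gives $d_H(u)+d_H(v)\ge\Delta+2$ for every edge $uv\in E(H)$: starting from a $\Delta$-edge-coloring of $H-uv$ (which exists by criticality), at most $d_H(u)+d_H(v)-2$ colors are forbidden at $uv$, so if this number were less than $\Delta$ the coloring would extend, contradicting $\chi'(H)=\Delta+1$. Combined with $\Delta\ge 2k$, this forces $L$ to be an independent set of $H$; moreover $L\ne\emptyset$ since $H$ is $k$-degenerate.

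Now $H[R]$ inherits $k$-degeneracy, so it contains a vertex $v\in R$ with $d_{H[R]}(v)\le k$. Since $d_H(v)>k$, such a $v$ must have at least one neighbor $u\in L$. Here I would invoke Vizing's Adjacency Lemma: in any $\Delta$-critical graph, for every edge $uv$ the vertex $v$ has at least $\Delta-d(u)+1$ neighbors of degree $\Delta$ distinct from $u$. Applied with $d_H(u)\le k$ and $\Delta\ge 2k$, this supplies at least $\Delta-k+1\ge k+1$ vertices of degree $\Delta$ in $N_H(v)\setminus\{u\}$; each of them has degree exceeding $k$ and therefore lies in $R$. Consequently $d_{H[R]}(v)\ge k+1$, contradicting the choice of $v$, and the theorem follows.

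The main obstacle is the appeal to Vizing's Adjacency Lemma, whose proof rests on the classical Vizing-fan / Kempe-chain recoloring technology and is not entirely routine. One might hope to avoid it using only the weak bound $d(u)+d(v)\ge\Delta+2$ established above, but that alone yields only $|N_H(v)\cap L|\ge 2$, which is too weak to contradict the $k$-degeneracy of $H[R]$; the stronger $\Delta-d(u)+1$ count coming from Vizing fans appears essential for closing the argument.
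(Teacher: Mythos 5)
Your argument is correct, but note that the paper does not prove this statement at all: Theorem~\ref{zhou} is quoted from Zhou's note \emph{A note on graphs of class I} and used as a black box, so there is no in-paper proof to compare against. Your reconstruction is the standard (and essentially Zhou's original) route: pass to an edge-minimal subgraph $H$ with $\chi'(H)>\Delta$, observe via Theorem~\ref{edge-coloring-1} that $\Delta(H)=\Delta$ and $H$ is $\Delta$-critical, use the tight-edge inequality $d_H(u)+d_H(v)\ge\Delta+2$ to make the low-degree vertices independent, and then play Vizing's Adjacency Lemma against the $k$-degeneracy of $H[R]$. The quantitative steps all check out: the minimal vertex $v$ of $H[R]$ has a neighbour $u$ with $d_H(u)\le k$, and the Adjacency Lemma hands $v$ at least $\Delta-k+1\ge k+1$ neighbours of degree $\Delta\ge 2k>k$, all of which lie in $R$, contradicting $d_{H[R]}(v)\le k$. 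Two trivial points you leave implicit and should state: $R\ne\emptyset$ (since $L$ is independent and $H$ has edges) and the degenerate case $k=0$; also, the Adjacency Lemma is stated for connected critical graphs, so pass to a component of $H$ realizing $\chi'=\Delta+1$. You are right that the weak bound $d(u)+d(v)\ge\Delta+2$ alone does not close the argument and that the full strength of the Adjacency Lemma is what makes the count work.
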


\section{Contracting matchings in a graph}

Let $G$ be a simple graph.
 An edge $e$ of $G$ is said to be {\em contracted} if it is deleted and its
end-vertices are identified. An edge subset $M$ of $G$ is called a {\em matching}  if no two edges in $M$ are adjacent in $G$.
Specifically, a matching $M$ is called {\em strong} if no two edges in $M$ are adjacent to a common edge.
This is equivalent to saying that $G[V(M)]=M$.  Thus, a strong matching is also called an {\em induced matching}.
Determining the chromatic index $\chi'(G)$ of a graph $G$ is certainly equivalent to
finding the least  $k$ such that $E(G)$ can be partitioned into $k$ edge-disjoint matchings,
and determining the strong chromatic index $\chi'_{\rm s}(G)$ of a graph $G$ is equivalent to
finding the least $k$ such that $E(G)$ can be partitioned into $k$ edge-disjoint strong matchings.
In what follows, an edge $k$-coloring of  $G$ with the color classes $E_1,E_2,\ldots,E_k$
will be denoted by $(E_1,E_2,\ldots,E_k)$.

Given a graph $G$ and a matching $M$ of $G$, let $G_M$ denote the graph obtained from $G$ by contracting each edge in $M$.
Note that $G_M$  may contain multi-edges, but no loops, even if $G$ is simple.

Let $a\ge 1$ and $b\ge 0$ be integers. A graph $G$ is said to be {\em $(a,b)$-graph} if
every subgraph $G'$ of $G$ (including itself)  has $|E(G')|\le a|V(G')|-b$.

\begin{theorem}
  \label{them1} Let $G$ be a  $(a,b)$-graph with $a\ge 1$ and $b\ge 0$.
Let $M$ be a matching of $G$. Then $G_M$ is a $(2a-1,b)$-graph.
\end{theorem}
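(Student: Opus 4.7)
The plan is to pull any subgraph $H$ of $G_M$ back to a subgraph $H^{\ast}$ of $G$ by \emph{uncontracting} those vertices of $H$ that arose from matching edges, and then apply the $(a,b)$-hypothesis directly to $H^{\ast}$.

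Concretely, fix an arbitrary subgraph $H$ of $G_M$ and set $n_H:=|V(H)|$. Write $M=\{u_1v_1,\dots,u_tv_t\}$ and let $w_i$ denote the contracted vertex that replaces $\{u_i,v_i\}$ in $G_M$. Let $s$ be the number of contracted vertices $w_i$ appearing in $V(H)$; without loss of generality these are $w_1,\dots,w_s$. Define
\[
V(H^{\ast}) := \bigl(V(H)\setminus\{w_1,\dots,w_s\}\bigr)\cup\{u_1,v_1,\dots,u_s,v_s\},
\]
so that $|V(H^{\ast})|=(n_H-s)+2s=n_H+s$. Each edge of $H$ is an edge of $G_M$, hence corresponds to a unique edge of $G\setminus M$ whose endpoints lie in $V(H^{\ast})$ (an edge of $H$ incident to $w_i$ lifts to an edge incident to $u_i$ or $v_i$, and parallel edges in $G_M$ lift to distinct edges of $G$). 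Let $E(H^{\ast})$ consist of these lifted edges together with the $s$ matching edges $u_1v_1,\dots,u_sv_s$; then $H^{\ast}$ is a (simple) subgraph of $G$ with $|E(H^{\ast})|=|E(H)|+s$.

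Applying the $(a,b)$-condition to $H^{\ast}$ yields
\[
|E(H)|+s \;=\; |E(H^{\ast})| \;\le\; a\,|V(H^{\ast})|-b \;=\; a(n_H+s)-b,
\]
so $|E(H)|\le a\,n_H+(a-1)s-b$. Since $a\ge 1$ and clearly $s\le n_H$, we obtain
\[
|E(H)|\;\le\; a\,n_H+(a-1)n_H-b \;=\; (2a-1)n_H-b,
\]
which is the desired bound for $H$. As $H$ was an arbitrary subgraph of $G_M$ (including $G_M$ itself), this shows that $G_M$ is a $(2a-1,b)$-graph.

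The only potential subtlety is bookkeeping when $G_M$ has parallel edges: one must verify that distinct parallel copies in $H$ lift to distinct edges of $G$ (they do, because contraction merges vertices but not edges), so the injection $E(H)\hookrightarrow E(H^{\ast})\setminus M$ is genuine. Apart from that, the argument is purely combinatorial counting, and the use of $a\ge 1$ (to pass from $(a-1)s$ to $(a-1)n_H$) is the single place where the hypothesis is invoked.
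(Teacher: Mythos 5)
Your proof is correct and follows essentially the same route as the paper: uncontract the contracted vertices of $H$ to obtain a subgraph of $G$ with $s$ extra vertices and $s$ extra edges, apply the $(a,b)$-bound there, and use $a\ge 1$ to absorb the surplus term. Your extra remark on parallel edges lifting to distinct edges of $G$ is a point the paper leaves implicit, but the counting is identical.
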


\begin{proof}
  \ Let $H$ be any subgraph of $G_M$.  Assume that $V(H)=V_1\cup V_2$, where $V_1$ is the set of vertices in $G_M$
which are formed by contracting some edges in $M$, and $V_2=V(H)\setminus V_1$, say,
  $V_1=\{x_1,x_2,\ldots,x_{n_1}\}$,  and $V_2=\{y_1,y_2,\ldots,y_{n_2}\}$. Then
  $|V(H)|=n_1+n_2$. Splitting each vertex $x_i\in V_1$ into two vertices $u_i$ and $v_i$ and
restoring corresponding incident edges for $u_i$ and $v_i$ in $G$,   we get a subgraph $G'$
of $G$ with
$$V(G')=\{u_1,u_2,\ldots,u_{n_1}; v_1,v_2,\ldots,v_{n_1};y_1,y_2,\ldots,y_{n_2}\}$$
and
$$E(G')=E(H)\cup M',$$
 \noindent where
$$M'=\{u_1v_1,u_2v_2,\ldots,u_{n_1}v_{n_1}\} \subseteq M.$$
It is easy to compute that $|V(G')|=2n_1+n_2$ and $|E(G')|=|E(H)|+n_1$.
By the assumption,  $|E(G')|\le  a|V(G')|-b$.
Since $a\ge 1$, we have $2a-1\ge a$.
Consequently,
  \begin{eqnarray*}
  |E(H)|&=& |E(G')|-n_1\\
  &\le&   a|V(G')|-b-n_1\\
  &=& a(2n_1+n_2)-b-n_1\\
   &=& (2a-1)n_1+an_2-b\\
  &\le& (2a-1)(n_1+n_2)-b\\
  &=& (2a-1)|V(H)|-b.
\end{eqnarray*}

This shows that $G_M$ is a $(2a-1,b)$-graph.
\end{proof}

\begin{corollary}\label{coro-4a-3}
Let $G$ be a  $(a,b)$-graph with $a,b\ge 1$.
Let $M$ be a matching of $G$.  Then $G_M$ is $(4a-3)$-degenerate.
\end{corollary}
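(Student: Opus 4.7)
The plan is to deduce this corollary directly from Theorem~\ref{them1} by a standard averaging argument. Since $G$ is an $(a,b)$-graph and $M$ is a matching, Theorem~\ref{them1} tells us that $G_M$ is a $(2a-1,b)$-graph; that is, every subgraph $H$ of $G_M$ satisfies
\[
|E(H)|\le (2a-1)|V(H)|-b.
\]
This single inequality is the only structural input we need. Note that the same bound applies to every subgraph of $G_M$, not just to $G_M$ itself, which is crucial for degeneracy.

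To verify $(4a-3)$-degeneracy, take an arbitrary nonempty subgraph $H$ of $G_M$ and consider the sum of degrees. By the handshake lemma (valid for multigraphs as well, since $G_M$ has no loops),
\[
\sum_{v\in V(H)}d_H(v)=2|E(H)|\le 2(2a-1)|V(H)|-2b=(4a-2)|V(H)|-2b.
\]
Dividing by $|V(H)|$, the minimum degree of $H$ satisfies
\[
\delta(H)\le 4a-2-\frac{2b}{|V(H)|}.
\]
Because $b\ge 1$, the subtracted term is strictly positive, so $\delta(H)<4a-2$, and hence $\delta(H)\le 4a-3$ since degrees are integers. As $H$ was arbitrary, every subgraph of $G_M$ contains a vertex of degree at most $4a-3$, which is exactly the definition of $(4a-3)$-degeneracy.

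There is essentially no obstacle here beyond being careful about two small points: first, that Theorem~\ref{them1} is applied uniformly to every subgraph of $G_M$ (not just $G_M$ itself), and second, that the hypothesis $b\ge 1$ is used to convert the non-strict inequality $|E(H)|\le (2a-1)|V(H)|-b$ into the strict bound $\delta(H)<4a-2$ needed to round down to $4a-3$. If one only had $b\ge 0$, the argument would yield only $(4a-2)$-degeneracy, so the hypothesis $b\ge 1$ is genuinely used.
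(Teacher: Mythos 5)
Your proof is correct and follows essentially the same route as the paper: both apply Theorem~\ref{them1} to conclude that $G_M$ is a $(2a-1,b)$-graph, then use the handshake lemma together with $b\ge 1$ to force a vertex of degree at most $4a-3$ in every subgraph (the paper phrases this as a proof by contradiction, you as a direct averaging bound, which is an immaterial difference).
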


\begin{proof}
\  It suffices to verify  that $\delta(H)\le 4a-3$  for any $H\subseteq G_M$.
Suppose to the contrary that $\delta(H)\ge 4a-2$. Since $b\ge 1$, Theorem \ref{them1} and the  Handshaking Theorem imply that
  $(4a-2)|V(H)|\le \delta(H)|V(H)|\le \sum\limits_{v\in V(H)}d_H(v)=2|E(H)|\le  2((2a-1)|V(H)|-b) = (4a-2)  |V(H)|-2b<(4a-2)|V(H)|$.
This leads to a contradiction.
\end{proof}

Similarly, we obtain the following consequence:

\begin{corollary}\label{coro-4a-2}
Let $G$ be a  $(a,0)$-graph with $a\ge 1$.
Let $M$ be a matching of $G$.  Then $G_M$ is $(4a-2)$-degenerate.
\end{corollary}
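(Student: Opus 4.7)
The plan is to mimic the contradiction argument used for Corollary \ref{coro-4a-3}, but to track carefully where the hypothesis $b\ge 1$ was used and compensate for its loss by relaxing the degeneracy bound by one.

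First I would invoke Theorem \ref{them1} with $b=0$ to conclude that $G_M$ is a $(2a-1,0)$-graph, i.e.\ every subgraph $H \subseteq G_M$ satisfies $|E(H)| \le (2a-1)|V(H)|$. To show $G_M$ is $(4a-2)$-degenerate, it then suffices to prove that $\delta(H) \le 4a-2$ for every nonempty $H \subseteq G_M$. I would argue by contradiction: assume $\delta(H) \ge 4a-1$.

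Applying the Handshaking Lemma together with the $(2a-1,0)$-bound gives
\[
(4a-1)|V(H)| \;\le\; \sum_{v\in V(H)} d_H(v) \;=\; 2|E(H)| \;\le\; 2(2a-1)|V(H)| \;=\; (4a-2)|V(H)|,
\]
and dividing by $|V(H)|\ge 1$ yields $4a-1\le 4a-2$, a contradiction. This finishes the proof.

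The only real subtlety, and the one place where this proof must diverge from the $b\ge 1$ case, is that the slack term $2b$ that produced the strict inequality in Corollary \ref{coro-4a-3} vanishes when $b=0$. I compensate by asking for one more than the claimed degeneracy bound (assuming $\delta(H)\ge 4a-1$ rather than $\ge 4a-2$), which is precisely why the conclusion here is $(4a-2)$-degenerate rather than $(4a-3)$-degenerate. No other obstacle arises, since Theorem \ref{them1} already does the heavy lifting of transferring the sparsity bound from $G$ to $G_M$.
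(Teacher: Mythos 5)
Your proof is correct and is exactly the argument the paper intends: the paper gives no explicit proof for this corollary, saying only ``similarly,'' and the intended adaptation of the proof of Corollary~\ref{coro-4a-3} is precisely what you wrote --- apply Theorem~\ref{them1} with $b=0$, then run the handshaking contradiction with the threshold raised by one to absorb the loss of the $-2b$ slack.
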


A matching $M$ of a graph $G$ is said to be {\em partitioned} into $q$ strong matchings of
$G$ if $M=M_1\cup M_2\cup \cdots \cup M_q$ and $M_i\cap M_j=\emptyset$ for $i\ne j$ such that each $M_i$ is a strong matching of $G$.
Let $\rho_G(M)$ denote the least $q$ such that $M$ is partitioned into $q$ strong matchings.
By definition, $1\le \rho_G(M)\le |M|$.

The following result is highly inspired from a result of \cite{fau} on the strong chromatic index of planar graphs.
For the sake of completeness, we here give the detailed proof.

\begin{lemma}\label{lem11}\  Let $G$ be a graph and $M$ be a matching of $G$.
Then $\rho_G(M)\le \chi(G_M)$.
\end{lemma}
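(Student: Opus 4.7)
The plan is to translate the problem of partitioning $M$ into strong matchings of $G$ into the problem of properly coloring a certain induced subgraph of $G_M$. The key is the natural correspondence between edges in $M$ and the ``contracted'' vertices in $G_M$: each edge $e = uv \in M$ collapses to a single vertex $x_e$ of $G_M$, and I will identify a subset $N \subseteq M$ with the vertex set $X_N = \{x_e : e \in N\} \subseteq V(G_M)$.

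First I would establish the pivotal observation: a subset $N \subseteq M$ is a strong matching of $G$ if and only if $X_N$ is an independent set in $G_M$. For two distinct edges $e = uv$ and $f = xy$ in $M$ (necessarily vertex-disjoint since $M$ is a matching), the pair $\{e,f\}$ fails to be a strong matching iff there exists an edge of $G$ with one endpoint in $\{u,v\}$ and the other in $\{x,y\}$; but such an edge in $G$ is precisely what becomes an edge between $x_e$ and $x_f$ in $G_M$ after contraction. Hence the ``strong matching'' condition on $N$ is exactly the ``independent set'' condition on $X_N$. (Loops cannot arise because $M$ is a matching of the simple graph $G$, so contracting an edge of $M$ identifies two vertices that are not themselves joined by another edge of $M$; multi-edges may appear in $G_M$, but they do not affect independence.)

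Next I would take a proper coloring $\phi : V(G_M) \to \{1,2,\dots,\chi(G_M)\}$ of $G_M$ and simply use it to partition $M$. For each color $i$, set
\[
M_i = \{\, e \in M : \phi(x_e) = i \,\}.
\]
By the observation above, each $M_i$ is a strong matching of $G$, and the $M_i$ are pairwise disjoint with union $M$. Discarding the possibly empty classes leaves at most $\chi(G_M)$ strong matchings, which gives $\rho_G(M) \le \chi(G_M)$.

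There is essentially no hard step here; the only thing one must be careful about is the bookkeeping in the correspondence between edges of $M$ and contracted vertices of $G_M$, and in particular the verification that two contracted vertices are adjacent in $G_M$ exactly when the two corresponding matching edges are joined by an edge of $G$. Once that dictionary is in place, the result follows directly from a proper vertex coloring of $G_M$.
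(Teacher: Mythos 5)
Your proof is correct and follows essentially the same route as the paper's: take a proper coloring of $G_M$, pull the color classes back to subsets of $M$ via the contracted vertices, and observe that adjacency of two contracted vertices in $G_M$ corresponds exactly to the two matching edges being joined by an edge of $G$. The only cosmetic difference is that you isolate this correspondence as an explicit ``strong matching iff independent set'' equivalence before applying the coloring, whereas the paper verifies it directly inside the contradiction argument.
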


\begin{proof}
\ Let $V(G_M)=S_1\cup S_2$, where $S_1$ is the set of vertices in $G_M$ formed from $G$
by contracting edges in $M$ and $S_2=V(G)\setminus V(M)$. Set $k=\chi(G_M)$.
Then $G_M$ admits a proper $k$-coloring $\phi: V(G_M)\to \{1,2,\ldots,k\}$.
For $1\le i\le k$, let $V_i$ denote the set of vertices in $G_M$  with the color $i$.
In $G$, for $1\le i\le k$, let

\ \ \ \ \ \ \  $E^*_i=\{e\in M\,|\,e\ {\rm is\ contracted\ to\ some\ vertex}\ v_e\in S_1\ {\rm with}\ \phi(v_e)=i\}.$

Let $e_1,e_2\in E^*_i$ be any two edges. Since  $e_1,e_2\in M$,  $e_1$ and $e_2$ are not adjacent in $G$.
We claim that no edge $e\in E(G)$  is simultaneously  adjacent to both
$e_1$ and $e_2$.  Assume to the contrary, there exists   $e=xy\in E(G)$  adjacent to
  $e_1$ and $e_2$. Without loss of generality, we may suppose that $e_1=xx'$ and $e_2=yy'$.
Let $v_{e_1}$ and $v_{e_2}$ denote the corresponding vertices of $e_1$  and $e_2$  in $S_1$, respectively.
Indeed, $x$ is $v_{e_1}$, and $y$ is $v_{e_2}$.
Since $xy\notin M$, it follows that $xy\in E(G_M)$ and thus $x$ is adjacent to $y$ in $G_M$.
By the definition of $\phi$, $\phi(x)\ne \phi(y)$. Let $\phi(x)=p$ and $\phi(y)=q$.
Then $e_1\in E^*_p$ and $e_2\in E^*_q$ with $p\ne q$, which contradicts the assumption that $e_1,e_2\in E^*_i$. So, each of $E_1^*,E^*_2,\ldots,E^*_k$
is a strong matching of $G$. This confirms that $\rho_G(M)\le k=\chi(G_M)$.
\end{proof}

\section{Strong chromatic index}

In this section, we will discuss the strong edge coloring of some graphs by using the previous preliminary  results.

\subsection{An upper bound}

We first establish an upper bound of strong chromatic index for a general graph $G$,
which reveals a relation between the strong chromatic index, chromatic index and maximum average degree of $G$.

\begin{lemma}\label{average}  Let $H$ be a subgraph of a graph $G$.
Then $|E(H)|\le \frac 12\bar{d}(G)|V(H)|$.
\end{lemma}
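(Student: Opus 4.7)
The plan is simply to unwind the definition of the maximum average degree. Recall that
\[
\bar{d}(G) \;=\; \max\!\left\{\frac{2|E(H')|}{|V(H')|} \ \Big|\ H'\subseteq G\right\},
\]
where the maximum ranges over all (non-empty) subgraphs of $G$. Since the given $H$ is itself a subgraph of $G$, it is one of the $H'$ over which the maximum is computed. Therefore $\frac{2|E(H)|}{|V(H)|}\le \bar{d}(G)$, and rearranging gives the claimed inequality $|E(H)|\le \tfrac12\bar{d}(G)|V(H)|$.

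The only thing worth checking is the trivial degenerate case $|V(H)|=0$, for which $|E(H)|=0$ and the inequality holds vacuously; for $|V(H)|\ge 1$ the argument above applies. There is no real obstacle here, as the statement is an immediate consequence of the definition; the lemma is being recorded in this convenient rearranged form so that in later proofs one can bound $|E(H)|$ by $\tfrac12 \bar{d}(G)\cdot |V(H)|$ without having to invoke the maximum-average-degree definition each time.
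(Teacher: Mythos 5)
Your proof is correct and is essentially identical to the paper's: both simply note that $H$ is one of the subgraphs over which the maximum defining $\bar{d}(G)$ is taken, so $\frac{2|E(H)|}{|V(H)|}\le \bar{d}(G)$, and rearrange. Your extra remark on the empty-subgraph case is a harmless (and reasonable) bit of added care.
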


\begin{proof}\ For any subgraph $H\subseteq G$, it follows from the definition of $\bar{d}(G)$ that
$\frac {2|E(H)|}{|V(H)|}\le \bar{d}(G)$.   Consequently, $|E(H)|\le \frac 12\bar{d}(G)|V(H)|$.
\end{proof}

\begin{theorem}\label{them3}
Every graph $G$ has $\chi'_{\rm s}(G)\le (2\bar{d}(G)-1)\chi'(G)$.
\end{theorem}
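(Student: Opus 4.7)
The plan is to combine Vizing-type edge partitioning with the contraction machinery developed in Section 3. Let $k=\chi'(G)$, and fix a proper edge $k$-coloring $(M_1,M_2,\ldots,M_k)$ of $G$, so each $M_i$ is a matching. The task reduces to bounding, for each $i$, the number $\rho_G(M_i)$ of strong matchings needed to partition $M_i$; then $\chi'_{\rm s}(G)\le\sum_{i=1}^{k}\rho_G(M_i)$.

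To bound $\rho_G(M_i)$, first I would observe from Lemma \ref{average} that $G$ itself is an $(a,0)$-graph with $a=\tfrac12\bar{d}(G)$ (every subgraph $H$ satisfies $|E(H)|\le a|V(H)|$). Then Corollary \ref{coro-4a-2} applies to the matching $M_i$, so the contracted graph $G_{M_i}$ is $(4a-2)$-degenerate, i.e.\ $(2\bar{d}(G)-2)$-degenerate. The degeneracy bound feeds into Lemma \ref{vertex-cloring-1} (greedy vertex coloring), yielding
\[
\chi(G_{M_i})\le 2\bar{d}(G)-1.
\]
Finally, Lemma \ref{lem11} converts this into the desired matching-partition estimate $\rho_G(M_i)\le\chi(G_{M_i})\le 2\bar{d}(G)-1$.

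Putting the pieces together, each $M_i$ can be partitioned into at most $2\bar{d}(G)-1$ strong matchings of $G$, and collecting these over $i=1,\ldots,k$ gives a partition of $E(G)=\bigcup_i M_i$ into at most $k(2\bar{d}(G)-1)=(2\bar{d}(G)-1)\chi'(G)$ strong matchings, which is exactly a strong edge coloring with that many colors.

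The steps are essentially bookkeeping once one sees the right chain Corollary \ref{coro-4a-2} $\Rightarrow$ Lemma \ref{vertex-cloring-1} $\Rightarrow$ Lemma \ref{lem11}; I do not expect any real obstacle. The only subtlety is that Corollary \ref{coro-4a-2} was stated for $(a,0)$-graphs with $a\ge 1$, i.e.\ it requires $\bar{d}(G)\ge 2$; if $\bar{d}(G)<2$ then $G$ is a forest and the statement is easily verified directly (or becomes vacuous once one rounds up $\bar{d}(G)$ suitably), so this boundary case can be disposed of in a single sentence.
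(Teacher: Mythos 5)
Your proposal is correct and follows essentially the same chain as the paper's own proof: partition $E(G)$ into $\chi'(G)$ matchings, contract each one, apply Lemma \ref{average} with Corollary \ref{coro-4a-2} to get $(2\bar{d}(G)-2)$-degeneracy, then Lemma \ref{vertex-cloring-1} and Lemma \ref{lem11}. Your extra sentence handling the boundary case $\bar{d}(G)<2$ (where $a\ge 1$ fails) is a small point the paper leaves implicit, but it does not change the argument.
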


\begin{proof}
 Let $k=\chi'(G)$. Then $G$ has an edge $k$-coloring $(E_1,E_2,\ldots,E_k)$, where each $E_i$ is a matching of $G$.
Let $G_i$ be the graph obtained from $G$ by contracting each of edges in $E_i$.
By Lemma \ref{average} and Corollary \ref{coro-4a-2}, $G_i$ is $(2\bar{d}(G)-2)$-degenerate.
By Lemma \ref{vertex-cloring-1}, $\chi(G_i)\le 2\bar{d}(G)-1$.
By Lemma \ref{lem11},  $\chi'_{\rm s}(G)\le (2\bar{d}(G)-1)k=(2\bar{d}(G)-1)\chi'(G)$.
\end{proof}

By Theorems  \ref{edge-coloring-1}, \ref{konig} and \ref{them3}, the following two corollaries hold automatically.

\begin{corollary}\label{coro-mav-1}
Every graph $G$ has $\chi'_{\rm s}(G)\le (2\bar{d}(G)-1)(\Delta+1)$.
\end{corollary}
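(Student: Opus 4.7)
The plan is to deduce the bound by chaining two already-established results. First, I would apply Theorem \ref{them3} to obtain the general inequality $\chi'_{\rm s}(G)\le (2\bar{d}(G)-1)\chi'(G)$, which was the main technical contribution built on the contraction argument of Theorem \ref{them1}, the degeneracy estimate of Corollary \ref{coro-4a-2}, the greedy coloring bound of Lemma \ref{vertex-cloring-1}, and the strong-matching partition of Lemma \ref{lem11}. Once this is in hand, the remainder is purely a matter of bounding $\chi'(G)$ above by a function of $\Delta$.

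Next, I would invoke Vizing's theorem (Theorem \ref{edge-coloring-1}), which gives the universal upper bound $\chi'(G)\le \Delta+1$ for every simple graph $G$. Substituting this directly into the inequality from Theorem \ref{them3} yields
\[
\chi'_{\rm s}(G)\le (2\bar{d}(G)-1)\chi'(G)\le (2\bar{d}(G)-1)(\Delta+1),
\]
which is exactly the desired conclusion. There is essentially no obstacle here; the deduction is a one-line substitution, and all the conceptual work has already been absorbed into Theorem \ref{them3} and the classical Vizing theorem.

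I note that Theorem \ref{konig} is mentioned in the author's remark as feeding into the corollaries, but it is not needed for this specific statement; it presumably enters only when one wishes to replace $\Delta+1$ by $\Delta$ in the bipartite (or more generally Class I) setting, which is the natural companion corollary. For the uniform statement over all simple graphs $G$, the combination of Theorem \ref{them3} and Theorem \ref{edge-coloring-1} is sufficient.
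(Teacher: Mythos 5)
Your proposal is correct and matches the paper exactly: the authors state that Corollaries \ref{coro-mav-1} and \ref{coro-mav-2} "hold automatically" from Theorems \ref{edge-coloring-1}, \ref{konig} and \ref{them3}, and the intended derivation for this corollary is precisely your one-line substitution of Vizing's bound $\chi'(G)\le\Delta+1$ into Theorem \ref{them3}. Your observation that Theorem \ref{konig} is only needed for the bipartite companion corollary is also accurate.
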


\begin{corollary}\label{coro-mav-2}
Every bipartite graph $G$ has $\chi'_{\rm s}(G)\le (2\bar{d}(G)-1)\Delta$.
\end{corollary}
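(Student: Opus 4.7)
The plan is to combine Theorem~\ref{them3} with K\"onig's edge-coloring theorem (Theorem~\ref{konig}) in the most direct way possible. Theorem~\ref{them3} supplies the bound $\chi'_{\rm s}(G)\le (2\bar{d}(G)-1)\chi'(G)$ for any simple graph, so the only task remaining for a bipartite graph is to substitute an optimal estimate for $\chi'(G)$.

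First I would invoke Theorem~\ref{them3} to obtain
\[
\chi'_{\rm s}(G)\le (2\bar{d}(G)-1)\chi'(G).
\]
Next, since $G$ is assumed to be bipartite, Theorem~\ref{konig} yields $\chi'(G)=\Delta$. Substituting this equality into the previous inequality gives exactly $\chi'_{\rm s}(G)\le (2\bar{d}(G)-1)\Delta$, which is the claimed bound.

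There is no serious obstacle here, as all the machinery is already packaged in the earlier results: Theorem~\ref{them3} was proved via the contraction-matching framework (Corollary~\ref{coro-4a-2} gives $(2\bar d(G)-2)$-degeneracy of the contracted graph, Lemma~\ref{vertex-cloring-1} then gives chromatic number at most $2\bar d(G)-1$, and Lemma~\ref{lem11} transfers this to a partition of each matching into strong matchings), while K\"onig's theorem handles the bipartite edge chromatic number. Thus the corollary is essentially an immediate specialization of Theorem~\ref{them3} to the bipartite case, parallel to how Corollary~\ref{coro-mav-1} specializes it via Vizing's theorem.
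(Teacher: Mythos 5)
Your proposal is correct and matches the paper exactly: the paper derives this corollary "automatically" from Theorem~\ref{them3} combined with K\"onig's theorem (Theorem~\ref{konig}), which is precisely the substitution $\chi'(G)=\Delta$ you perform. Nothing further is needed.
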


\begin{corollary}\label{coro-mav-3}
If $G$ is a graph with $\Delta\ge 2\bar{d}(G)$, then $\chi'_{\rm s}(G)\le (2\bar{d}(G)-1)\Delta$.
\end{corollary}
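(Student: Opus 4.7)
The plan is to combine Theorem \ref{them3} with Theorem \ref{zhou}, exactly in the same spirit as Corollaries \ref{coro-mav-1} and \ref{coro-mav-2}, which used Vizing's and K\"onig's theorems, respectively, to upgrade the generic $\chi'(G)$-factor into a concrete $\Delta$- or $(\Delta+1)$-factor. Here, the hypothesis $\Delta\ge 2\bar d(G)$ is tailored so that Zhou's degeneracy/edge-coloring theorem applies, letting us replace $\chi'(G)$ by $\Delta$.

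First I would invoke Theorem \ref{them3} to obtain $\chi'_{\rm s}(G)\le (2\bar d(G)-1)\chi'(G)$; thus the whole corollary reduces to showing $\chi'(G)=\Delta$ under the assumption $\Delta\ge 2\bar d(G)$. To apply Theorem \ref{zhou}, I need $G$ to be $k$-degenerate for some integer $k$ with $\Delta\ge 2k$. Set $k=\lfloor \bar d(G)\rfloor$. For every subgraph $H\subseteq G$, the definition of $\bar d(G)$ gives
\[
\delta(H)\;\le\;\frac{2|E(H)|}{|V(H)|}\;\le\;\bar d(G),
\]
and because $\delta(H)$ is an integer this forces $\delta(H)\le\lfloor \bar d(G)\rfloor=k$. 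Hence $G$ is $k$-degenerate.

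Now the inequality $\Delta\ge 2\bar d(G)\ge 2k$ is exactly what Theorem \ref{zhou} requires, so $\chi'(G)=\Delta$. Plugging this back into Theorem \ref{them3} yields
\[
\chi'_{\rm s}(G)\;\le\;(2\bar d(G)-1)\,\chi'(G)\;=\;(2\bar d(G)-1)\,\Delta,
\]
as claimed.

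There is no real obstacle here: the proof is a two-line composition of Theorem \ref{them3} and Theorem \ref{zhou}. The only point needing a little care is that $\bar d(G)$ need not be an integer, which is resolved by passing to $k=\lfloor \bar d(G)\rfloor$ and observing that minimum degrees are integer-valued, so that both the degeneracy of $G$ and the bound $\Delta\ge 2k$ follow immediately from the hypothesis.
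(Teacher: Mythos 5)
Your proposal is correct and follows exactly the paper's own route: apply Theorem \ref{zhou} to conclude $\chi'(G)=\Delta$ and substitute into Theorem \ref{them3}. Your extra care in passing to $k=\lfloor\bar d(G)\rfloor$ (since $\bar d(G)$ need not be an integer) is a welcome refinement of the paper's terser claim that $G$ is ``$\bar d(G)$-degenerate,'' but the argument is otherwise identical.
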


\begin{proof}
 \ Since $G$ is $\bar{d}(G)$-degenerate and $\Delta\ge 2\bar{d}(G)$,
Theorem \ref{zhou} asserts that $\chi'(G)=\Delta$. By Theorem \ref{them3},
$\chi'_{\rm s}(G)\le (2\bar{d}(G)-1)\Delta$.
\end{proof}

\subsection{1-planar graphs}

Recently, Liu et al.\,\cite{liu} investigated the existence of  light edges in a 1-planar graph  with minimum degree at least three.
For our purpose, we here list one of their results as follows:

\begin{theorem}\label{7-7} {\rm (\cite{liu})}
Every $1$-planar graph $G$ with $\delta(G)=7$ contains two adjacent $7$-vertices.
\end{theorem}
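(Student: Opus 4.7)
My plan is to argue by contradiction via a discharging argument on the planarization of $G$. Assume $G$ is a $1$-planar graph with $\delta(G)=7$ in which no two $7$-vertices are adjacent; equivalently, every $7$-vertex has all seven of its neighbours of degree at least $8$. Fix a $1$-planar embedding of $G$ and form the plane graph $G^{\times}$ by replacing each crossing point with a new \emph{false} vertex of degree $4$; write $V_0 = V(G)$ for the original vertices and $V_c$ for the false ones.

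Using Euler's formula on $G^{\times}$, I would assign the usual initial charges $\mu(x) = d_{G^{\times}}(x) - 4$ to each vertex $x$ and $\mu(f) = \ell(f) - 4$ to each face $f$, so that $\sum \mu = -8$. Every false vertex has charge $0$; every original vertex has charge at least $\delta(G) - 4 = 3$; and the only elements with negative charge are the $3$-faces, each contributing $-1$.

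I would then install discharging rules by which high-degree original vertices distribute their surplus. Specifically, every original vertex first pays enough charge to its incident $3$-faces to cover each face's $-1$ deficit, and then each $8^{+}$-vertex sends a small additional amount to every adjacent $7$-vertex through a shared face. Because the $7$-vertices are independent, each $7$-vertex $v$ is surrounded by seven $8^{+}$-neighbours and can thus be restored to non-negative charge, while each $8^{+}$-vertex $u$ has enough initial surplus $d_G(u) - 4$ to absorb both the face payments and the payments to at most $d_G(u)$ adjacent $7$-vertices.

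The main obstacle I anticipate is the interference from the false vertices: an edge of $G$ that is crossed in the drawing is subdivided into two edges of $G^{\times}$, so a $3$-face of $G^{\times}$ may have a false vertex on its boundary. That false vertex carries no surplus to contribute and, worse, a $7$-vertex may need to receive charge from an $8^{+}$-vertex across a face that is interrupted by one or two false vertices. The heart of the case analysis is therefore to use $1$-planarity — the fact that no edge is crossed by more than one other — to bound the number of such interrupted faces incident to any $7$-vertex, and hence to cap how much charge each $8^{+}$-neighbour must ferry across them. Once this local verification is in place and the arithmetic checks out, every final charge will be $\ge 0$, contradicting $\sum \mu = -8$ and completing the proof.
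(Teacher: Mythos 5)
This theorem is not proved in the paper at all: it is imported verbatim from Liu, Wang and Wang \cite{liu}, so there is no in-paper argument to compare against. Judged on its own, your proposal is a plan rather than a proof, and the plan as stated does not close. The decisive step --- the ``local verification'' that every vertex and face ends with non-negative charge --- is exactly what you defer, and the arithmetic you assert for it is already false in the worst case. With initial charge $d-4$ on vertices of the planarization and $-1$ on each $3$-face, a $3$-face containing a false vertex has only two original vertices on its boundary, so each owes it $1/2$. A $7$-vertex incident with seven such faces would have to pay $7/2 > 3$, and, more fatally for your rule, an $8$-vertex incident with eight such faces pays $4 = d-4$ and has \emph{nothing} left to ferry to its (possibly many) adjacent $7$-vertices. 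So the claim that ``each $8^{+}$-vertex $u$ has enough initial surplus $d_G(u)-4$ to absorb both the face payments and the payments to at most $d_G(u)$ adjacent $7$-vertices'' fails precisely at degree $8$, which is the critical degree here. To repair this you would need genuine structural lemmas --- e.g.\ that false vertices are pairwise non-adjacent in $G^{\times}$ (true, since each edge is crossed at most once, so a $3$-face has at most one false vertex, not ``one or two''), bounds on how many crossed $3$-faces can appear consecutively around a $7$-vertex, and a way to harvest the positive charge of $4^{+}$-faces --- and then a full case analysis. That is the actual content of the theorem, and it is the part your write-up acknowledges but does not supply; the published proof in \cite{liu} is a nontrivial discharging argument of exactly this kind, which is consistent with the obstruction you ran into.

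One further caution: the simple density bound $|E(G)|\le 4|V(G)|-8$ together with the independence of the $7$-vertices yields no contradiction by counting alone (it only forces at least $16$ vertices of degree $7$), so there is no shortcut around the detailed local analysis.
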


With  a greedy coloring procedure, it can be constructively shown that the strong chromatic index of
a simple graph $G$ is at most $2\Delta(\Delta-1)+1$.

\begin{theorem}\label{1-planar}\ If $G$ is a $1$-planar graph, then $\chi'_{\rm s}(G)\le 14\Delta$.
 \end{theorem}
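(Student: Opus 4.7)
The plan is to split the argument into two regimes according to the value of $\Delta$. For $\Delta \ge 10$ I would apply the machinery of Section~3 directly. Since $G$ is $1$-planar, the inequality $|E(G')| \le 4|V(G')|-8$ of Fabrici and Madaras makes $G$ a $(4,8)$-graph; then for every matching $M$ of $G$, Corollary~\ref{coro-4a-3} (with $a=4$, $b=8$) gives that $G_M$ is $13$-degenerate, so $\chi(G_M) \le 14$ by Lemma~\ref{vertex-cloring-1} and hence $\rho_G(M) \le 14$ by Lemma~\ref{lem11}. Combining this with $\chi'(G) = \Delta$ from Theorem~\ref{edge-coloring-3} and a decomposition of $E(G)$ into $\Delta$ matchings yields $\chi'_{\rm s}(G) \le 14\Delta$ at once.

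For $\Delta \le 9$ I would proceed by induction on $|E(G)|$, the base case of no edges being trivial. Two sub-cases arise, governed by $\delta(G)$. If $\delta(G) \le 6$, pick a vertex $v$ with $d(v) \le 6$, apply the inductive hypothesis to $G - v$, and extend the coloring by greedily assigning colors to the edges at $v$ one at a time; for any such edge $vw$, the number of forbidden colors is at most $(d(v)-1)+(d(w)-1)+(d(v)-1)(\Delta-1)+(d(w)-1)(\Delta-1) \le (\Delta-1)^2+6(\Delta-1)+5$, which stays strictly below $14\Delta$ throughout $\Delta \le 9$. If $\delta(G) = 7$, invoke Theorem~\ref{7-7} to locate two adjacent $7$-vertices $u,v$, apply induction to $G - uv$, and greedy-color the single remaining edge $uv$; its forbidden-color count is at most $12\Delta$, again strictly less than $14\Delta$.

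The main obstacle is the intermediate range $\Delta \in \{8,9\}$. The naive greedy bound $2\Delta(\Delta-1)+1$ already exceeds $14\Delta$ at $\Delta = 8$, and the clean Case~1 argument loses a factor because $1$-planar graphs with $\Delta \le 9$ are not known to be of Class~I, so $\chi'(G)$ could be $\Delta+1$, giving only $14(\Delta+1) = 14\Delta+14$. Theorem~\ref{7-7} is precisely what bypasses this: when $\delta(G) = 7$ it forces an edge between two $7$-vertices, whose removal preserves $\Delta$ and keeps the forbidden-color count near $12\Delta$ rather than the worst-case $2\Delta(\Delta-1)$, letting the greedy extension succeed inside the $14\Delta$ budget. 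Verifying the crude inequalities $(\Delta-1)^2+6(\Delta-1)+5 < 14\Delta$ and $12\Delta < 14\Delta$ for $\Delta \le 9$ is what glues the induction together.
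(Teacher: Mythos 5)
Your $\Delta\ge 10$ case is exactly the paper's argument, and your treatment of the subcase $\delta(G)\le 6$ (delete a minimum-degree vertex, extend greedily, bound the forbidden colors for $vw$ by $\bigl((d(v)-1)+(d(w)-1)\bigr)\Delta\le(\Delta+4)\Delta<14\Delta$ for $\Delta\le 9$) is sound and matches the paper's Case 3.1; folding $\Delta=8$ into the induction instead of quoting the $k$-degenerate bound is a harmless variation.

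The gap is in the subcase $\delta(G)=7$. You delete only the edge $uv$ joining the two adjacent $7$-vertices, take a strong edge coloring of $G-uv$ by induction, and then color $uv$. But a strong edge coloring of $G-uv$ need not remain strong once it is viewed inside $G$: two edges $ux$ and $vy$ with $x\ne y$ may be at distance at least $3$ in $G-uv$ and hence legitimately share a color there, yet in $G$ both are adjacent to $uv$ and so lie at distance $2$; reinserting $uv$ therefore creates conflicts among the \emph{old} edges that your $12\Delta$ count, which only controls conflicts involving $uv$ itself, does nothing to prevent. Vertex deletion does not suffer from this defect: an edge $uz$ incident to the deleted vertex $u$ that is adjacent to two surviving edges forces those edges to share the endpoint $z$, so a strong edge coloring of $G-u$ is automatically a valid partial strong edge coloring of $G$. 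This is why the paper deletes the $7$-vertex $u$ and recolors all seven edges at $u$; there the degree-$7$ neighbour $u_0$ guaranteed by Theorem \ref{7-7} is genuinely needed to make the greedy counts ($|L(uu_0)|\ge 24$ and $|L(uu_i)|\ge 8>6$) close, since with seven degree-$9$ neighbours the count would drop to $6$ and the last edges could not be colored. Your subcase can be repaired along these lines, but as written it does not prove the theorem.
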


\begin{proof}
\  The proof is split into the following cases, depending on the size of $\Delta$.

\begin{enumerate}[{Case }1:]
\item \ $\Delta\le 7$.

It is easy to check that $2\Delta(\Delta-1)+1\le 14\Delta$ and henceforth the result follows.

\item \ $\Delta=8$.

Since $G$ is $7$-degenerate, it follows from the result of \cite{wan} that
$\chi'_{\rm s}(G)\le (4\times 7-2)\Delta-2\times 7^2+1=26\Delta-97=111<112=14\Delta$.

\item \ $\Delta=9$.

The proof is given by induction on the number of edges in $G$.
If $|E(G)|\le 14\Delta=126$, the result holds trivially, since we may color all edges of $G$ with distinct colors.
Let $G$ be a 1-planar graph with $\Delta=9$ and $|E(G)|> 126$.
Without loss of generality,  assume that $G$ is connected, hence $\delta(G)\ge 1$.
We have to consider two subcases as follows.

\begin{enumerate}[{Case }3.1:]
\item\ $\delta(G)\le 6$.

Let $u\in V(G)$ with $d_G(u)=\delta(G)\ge 1$. Let $u_0,u_1,\ldots,u_{s-1}$ denote the neighbors of $u$ in a cyclic order, where
$1\le s=\delta(G)\le 6$. For $0\le i\le s-1$, let $x_i^1,x_i^2,\ldots,x_i^{p_i}$ denote the neighbors of $u_i$ other than $u$.
Consider the graph $H=G-u$. Then $H$ is a 1-planar graph with $\Delta(H)\le 9$ and $|E(H)|<|E(G)|$. By the induction hypothesis or Cases 1 and 2,
$H$ admits a strong edge coloring $\phi$ using the color set $C=\{1,2,\ldots,126\}$.
For a vertex $v\in V(H)$, let $C(v)$ denote the set of colors assigned to the edges incident with $v$.
For $i=0,1,\ldots,s-1$, define a list $L(uu_i)$ of available colors for the edge $uu_i$ as follows:
$$L(uu_i)=C- \bigcup\limits_{0\le j\le s-1; \ j\ne i} C(u_j) -  \bigcup\limits_{1\le t\le p_i} C(x_i^{t}).$$
It is easy to calculate that
\begin{eqnarray*}
|L(uu_i)|    &\ge & |C|- |\bigcup\limits_{0\le j\le s-1; \ j\ne i} C(u_j)|-|\bigcup\limits_{1\le t\le p_i} C(x_i^{t})|\\
  &\ge&   126-(s-1)(\Delta-1)-(\Delta-1)\Delta\\
   &\ge& 126-(6-1)\times (9-1)-(9-1)\times 9\\
   &=&14.
\end{eqnarray*}

Based on $\phi$, we color $uu_0$ with a color $a_0\in L(uu_0)$,
$uu_1$ with a color $a_1\in L(uu_1)\setminus \{a_0\}$, $\cdots,$  $uu_{s-1}$ with a color $a_{s-1}\in L(uu_{s-1})\setminus
\{a_0,a_1,\ldots,a_{s-2}\}$. It is easy to testify that
$\phi$ is extended to whole graph $G$.

\item \ $\delta(G)=7$.

By Theorem \ref{7-7}, $G$ contains two adjacent 7-vertices.
Let $u$ be a 7-vertex of $G$ with neighbors $u_0,u_1,\ldots,u_6$ such that
$d_G(u_0)=7$ and $d_G(u_i)\le 9$ for $i=1,2,\ldots,6$.
Similarly to Case 3.1, for $0\le i\le 6$, let $x_i^1,x_i^2,\ldots,x_i^{p_i}$ denote the neighbors of $u_i$ other than $u$.
Note that $p_0=6$ and $p_i\le 8$ for $i\ge 1$.
Let $H=G-u$, which has a strong edge coloring $\phi$ using the color set $C=\{1,2,\ldots,126\}$, by the induction hypothesis or Cases 1 and 2.
For each $0\le i\le 6$, we define similarly a list $L(uu_i)$ of available colors.
It is easy to check that
 \begin{eqnarray*}
|L(uu_0)|    &\ge & |C|- |\bigcup\limits_{1\le j\le 6} C(u_j)|-|\bigcup\limits_{1\le t\le 6} C(x_0^{t})|\\
  &\ge&   126-6 (\Delta-1)-6\Delta\\
   &=&24.
\end{eqnarray*}
For $1\le i\le 6$,
  \begin{eqnarray*}
|L(uu_i)|    &\ge & |C|-  |C(u_0)|-   |\bigcup\limits_{1\le j\le 6;\ j\ne i} C(u_j)|-|\bigcup\limits_{1\le t\le {p_i}} C(x_i^{t})|\\
  &\ge&   126-6-5(\Delta-1)-8\Delta\\
   &=&8.
\end{eqnarray*}
Based on $\phi$, we color $uu_0$ with a color $a_0\in L(uu_0)$,
$uu_1$ with a color $a_1\in L(uu_1)\setminus \{a_0\}$, $\cdots,$  $uu_{6}$ with a color $a_{6}\in L(uu_{6})\setminus
\{a_0,a_1,\ldots,a_{5}\}$. It is easy to confirm that
$\phi$ is extended to  $G$.
\end{enumerate}
\item \ $\Delta\ge 10$.

By Theorem \ref{edge-coloring-3}, $G$ is of Class I. Let $(E_1,E_2,\ldots,E_{\Delta})$ be an edge
$\Delta$-coloring of $G$, where each $E_i$ is a matching of $G$.
Let $G_i$ be the graph obtained from $G$ by contracting each edge in $E_i$.
Note that each subgraph $H$ of $G$ is 1-planar and therefore $|E(H)|\le 4|V(H)|-8$.
Taking $a=4$ and $b=8$ in Corollary \ref{coro-4a-3}, we deduce that $G_i$ is $13$-degenerate.
By Lemma \ref{vertex-cloring-1}, $\chi(G_i)\le 14$.
Therefore $\chi'_{\rm s}(G)\le 14\Delta$.
\end{enumerate}
\end{proof}

\subsection{Special 1-planar graphs}

Suppose that $G$ is a 1-planar graph which is drawn in the plane so that each edge is
crossed by at most one other edge.  Let $E'$ and $E''$ denote the set of non-crossing edges and crossing edges of $G$, respectively. Let $H_1=G[E']$ and $H_2=G[E'']$. That is, $H_1$ and $H_2$ are the subgraphs of $G$ induced by
non-crossing edges and crossing edges, respectively.
\begin{theorem}\label{special}
Let $G$ be a $1$-planar graph. Then $\chi'_{\rm s}(G)\le 6\chi'(H_1)+14\chi'(H_2)$.
\end{theorem}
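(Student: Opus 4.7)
The plan is to partition the edge set $E(G)=E(H_1)\cup E(H_2)$ via separate proper edge colorings of $H_1$ and $H_2$, and then for each resulting matching $M$ of $G$ bound $\rho_G(M)$ by $\chi(G_M)$ using Lemma~\ref{lem11}; a topological argument will handle matchings inside $H_1$ and the density argument from the proof of Theorem~\ref{1-planar} will handle matchings inside $H_2$. Concretely, fix a proper edge coloring $(E_1,\dots,E_p)$ of $H_1$ with $p=\chi'(H_1)$ and a proper edge coloring $(F_1,\dots,F_q)$ of $H_2$ with $q=\chi'(H_2)$. Each $E_i$ and each $F_j$ is a matching of $G$, so Lemma~\ref{lem11} gives
\[
\chi'_s(G)\;\le\;\sum_{i=1}^{p}\rho_G(E_i)+\sum_{j=1}^{q}\rho_G(F_j)\;\le\;\sum_{i=1}^{p}\chi(G_{E_i})+\sum_{j=1}^{q}\chi(G_{F_j}).
\]

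For each non-crossing matching $E_i$, I would argue that $G_{E_i}$ is $6$-colorable. Fix the given $1$-planar drawing of $G$: every edge of $E_i$ is a crossing-free curve, and the edges of $E_i$ are pairwise vertex-disjoint, so one may contract each edge of $E_i$ locally along its curve without creating or destroying any crossings, producing a $1$-planar drawing of $G_{E_i}$ as a multigraph. Since vertex coloring ignores parallel edges, the underlying simple graph remains $1$-planar, and Theorem~\ref{vertex-coloring-2} yields $\chi(G_{E_i})\le 6$. For each crossing matching $F_j$, I would instead use the global density bound that every subgraph $H$ of $G$ with $|V(H)|\ge 3$ satisfies $|E(H)|\le 4|V(H)|-8$, so $G$ is a $(4,8)$-graph for the purposes of the degeneracy argument; Corollary~\ref{coro-4a-3} with $a=4$ and $b=8$ shows $G_{F_j}$ is $13$-degenerate, and Lemma~\ref{vertex-cloring-1} then gives $\chi(G_{F_j})\le 14$.

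Combining the two estimates yields $\chi'_s(G)\le 6p+14q=6\chi'(H_1)+14\chi'(H_2)$, as claimed. The main obstacle is the topological claim that contracting a matching of non-crossing edges in a $1$-planar drawing preserves $1$-planarity of the underlying simple graph; the key point is that matching edges are pairwise vertex-disjoint, so each contraction is a purely local operation along a crossing-free arc and cannot interfere with the crossing structure of the remaining edges. Once this is in place, the rest of the proof merely assembles pieces already developed earlier in the paper.
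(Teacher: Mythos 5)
Your proposal is correct and follows essentially the same route as the paper: decompose $E(G)$ via proper edge colorings of $H_1$ and $H_2$, bound $\rho_G(E_i)\le\chi(G_{E_i})\le 6$ using Borodin's theorem (Theorem~\ref{vertex-coloring-2}) for the non-crossing matchings, and bound $\rho_G(F_j)\le 14$ via the $(4,8)$-density and Corollary~\ref{coro-4a-3} for the crossing matchings. The only difference is that you spell out the topological justification that contracting a matching of non-crossing edges preserves $1$-planarity, which the paper asserts without proof; your justification is sound since the arcs of a non-crossing matching are pairwise disjoint and uncrossed, so each contraction is local.
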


\begin{proof}
\  Let $k_1=\chi'(H_1)$ and $k_2=\chi'(H_2)$. Then $\chi'(G)\le k_1+k_2$.
Let $(E_1,E_2,\ldots,E_{k_1})$ be an edge $k_1$-coloring of $H_1$,
and $(F_1,F_2,\ldots,F_{k_2})$ be an edge $k_2$-coloring of $H_2$.
Then each of $E_i$'s and $F_j$'s is a matching in $G$. So,
$(E_1,E_2,\ldots,E_{k_1},F_1,F_2,\ldots,F_{k_2})$ is an edge $(k_1+k_2)$-coloring of $G$.
Similarly to the proof of Case 4 in Theorem \ref{1-planar}, every $F_j$ can be partitioned into 14 strong matchings of $G$.
Moreover, for each $1\le i\le k_1$, because $G_{E_i}$ is a 1-planar graph, we derive that  $\chi(G_{E_i})\le 6$
by Theorem \ref{vertex-coloring-2}. By Lemma \ref{lem11}, $E_i$ can be partitioned into 6 strong matchings.
Consequently, $\chi'_{\rm s}(G)\le 6k_1+14k_2$.
\end{proof}

An {\em IC-planar graph} is a  $1$-planar graph  such that two pairs of crossing
edges have no common end-vertices. Equivalently, each vertex of this kind of 1-planar graph is  incident with at most one crossing edge.
It is easy to verify that  every IC-planar graph $G$ has $|E(G)|\le 3.25|V(G)|-6$  and this bound is attainable.
 Kr\'{a}l and Stacho \cite{kral} showed that every IC-planar graph is vertex 5-colorable.
 Yang et al.\,\cite{ywwl} showed that every IC-planar graph is vertex 6-choosable.
Furthermore, Dvo\v{r}\'ak et al.\,\cite{dv} proved
that every graph drawn in the plane so that the distance between every pair of crossings is at
least 15 is 5-choosable.

Using Theorem \ref{special}, we can establish the smaller upper bound for the strong chromatic index
of IC-planar graphs.

\begin{theorem}\label{IC}
Every IC-planar graph $G$ has $\chi'_{\rm s}(G)\le 6\Delta+20$.
\end{theorem}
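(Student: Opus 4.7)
The plan is to apply Theorem \ref{special} directly, reducing the task to upper-bounding the ordinary chromatic indices of $H_1$ and $H_2$ via the IC-planar structure.

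The key observation I would exploit is the defining property of IC-planar graphs: no two pairs of crossing edges share a common end-vertex, equivalently, every vertex of $G$ is incident with at most one crossing edge. In terms of the crossing-edge subgraph $H_2$, this simply says $\Delta(H_2)\le 1$, so $H_2$ is a matching and thus $\chi'(H_2)\le 1$. Meanwhile, $H_1$ is a planar subgraph of $G$ with $\Delta(H_1)\le \Delta$, and Vizing's theorem (Theorem \ref{edge-coloring-1}) gives $\chi'(H_1)\le \Delta(H_1)+1\le \Delta+1$.

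Plugging both bounds into Theorem \ref{special} would then yield
\[
\chi'_{\rm s}(G)\le 6\chi'(H_1)+14\chi'(H_2)\le 6(\Delta+1)+14\cdot 1 = 6\Delta+20,
\]
which is exactly the desired bound. The only conceptual step is recognizing that the IC-planar hypothesis is precisely what collapses $H_2$ into a matching; once that is noticed the result is a one-line consequence of Theorem \ref{special} and Vizing's theorem, so I do not anticipate any substantial obstacle. One could even tighten the constant slightly by invoking Theorem \ref{edge-coloring-2} in place of Vizing whenever $\Delta(H_1)\ge 7$, but this refinement is unnecessary for the stated inequality.
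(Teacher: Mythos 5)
Your proof is correct and follows essentially the same route as the paper: apply Theorem \ref{special}, observe that the IC-planar condition forces $H_2$ to be a matching so that $\chi'(H_2)\le 1$, and bound $\chi'(H_1)$ for the planar part. In fact your uniform use of Vizing's bound $\chi'(H_1)\le \Delta(H_1)+1\le \Delta+1$ yields $6(\Delta+1)+14=6\Delta+20$ in one stroke and streamlines the paper's argument, which instead treats $\Delta\le 5$ separately by a greedy bound and then splits on $\Delta(H_1)\ge 7$, $\Delta(H_1)=6$, $\Delta(H_1)=5$, invoking Theorem \ref{edge-coloring-2} to obtain the slightly sharper $6\Delta+14$ when $\Delta(H_1)\ge 7$.
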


\begin{proof}
\ If $\Delta\le 5$, then it is easy to obtain  that
$\chi'_{\rm s}(G)\le 2\Delta(\Delta-1)+1\le 6\Delta+20$ and therefore the theorem holds.
So assume that $\Delta\ge 6$.
Let $H_1$ and $H_2$ denote the graphs induced by non-crossing edges and crossing edges of $G$, respectively.
Since no two crossing-edges  of $G$ are adjacent, $H_2$ is a matching of $G$.
Thus, $\chi'(H_2)\le 1$. Note that $H_1$ is a planar graph with $\Delta(H_1)\le \Delta$.
If $\Delta(H_1)\ge 7$, then $\chi'(H_1)=\Delta(H_1)\le \Delta$ by Theorem \ref{edge-coloring-2}.
So, by Theorem \ref{special}, $\chi'_{\rm s}(G)\le 6\chi'(H_1)+14\chi'(H_2)\le 6\Delta+14$.
Otherwise, we have to consider two cases as follows:

$\bullet$\   $\Delta(H_1)=6$. Then $6\le \Delta\le 7$.
By Theorem \ref{edge-coloring-1}, $\chi'(H_1)\le 7$. By Theorem \ref{special},
   $\chi'_{\rm s}(G)\le 6\chi'(H_1)+14\chi'(H_2)\le   6\times 7+14= 56\le 6\Delta+20$.

$\bullet$\   $\Delta(H_1)=5$. Then $\Delta=6$ by the assumption.
By Theorem \ref{edge-coloring-1}, $\chi'(H_1)\le 6$. By Theorem \ref{special},
   $\chi'_{\rm s}(G)\le 6\chi'(H_1)+14\chi'(H_2)\le   6\times 6+14= 50=6\Delta+20$.
\end{proof}

A 1-planar graph $G$ is called {\em optimal} if $|E(G)|=4|V(G)|-8$.
A {\em plane quadrangulation} is a plane graph such that each face of $G$ is of degree 4.
It is not hard to show that a 3-connected plane quadrangulation is a bipartite plane graph with minimum degree 3.
Suzuki \cite{suz} showed that every simple optimal 1-planar graph $G$ can be obtained
from a 3-connected plane quadrangulation by adding a pair of crossing edges to each face of $G$.
So an optimal 1-planar graph is an Eulerian graph, i.e., each vertex is of even degree.
It was shown in \cite{len} that every optimal 1-planar graph $G$ can be edge-partitioned into two planar graphs $G_1$ and $G_2$
such that $\Delta(G_2)\le 4$.

\begin{theorem}\label{optimal}
Every optimal 1-planar graph $G$ has $\chi'_{\rm s}(G)\le 10\Delta+14$.
\end{theorem}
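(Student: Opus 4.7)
The plan is to combine Suzuki's structural theorem with Theorem~\ref{special}. By Suzuki~\cite{suz}, $G$ admits a 1-planar drawing in which the non-crossing edges form a 3-connected plane quadrangulation $Q$, and each face of $Q$ contains an inserted pair of crossing diagonals. In the notation of Theorem~\ref{special}, this makes $H_1=Q$ the non-crossing subgraph and $H_2$ the subgraph consisting of all the inserted diagonals, so it suffices to bound $\chi'(H_1)$ and $\chi'(H_2)$ and then plug into $\chi'_{\rm s}(G)\le 6\chi'(H_1)+14\chi'(H_2)$.

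First, since $Q$ is a plane quadrangulation, it is bipartite, so K\"onig's theorem (Theorem~\ref{konig}) gives $\chi'(H_1)=\Delta(H_1)=\Delta(Q)$. Next I would carry out the key degree count: in each face $F$ of $Q$ with boundary $a,b,c,d$ in cyclic order, the two diagonals are $ac$ and $bd$, so exactly one of the two diagonals is incident with each vertex on the boundary of $F$. Summing over the $d_Q(v)$ faces of $Q$ meeting a vertex $v$ yields $d_{H_2}(v)=d_Q(v)$, and hence $d_G(v)=2d_Q(v)$ for every $v$. In particular, $\Delta(Q)=\Delta/2$ (so $\Delta$ is necessarily even, consistent with optimal 1-planar graphs being Eulerian). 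Thus $\chi'(H_1)=\Delta/2$ and $\Delta(H_2)=\Delta/2$, and Vizing's theorem (Theorem~\ref{edge-coloring-1}) gives $\chi'(H_2)\le \Delta/2+1$.

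Substituting into Theorem~\ref{special},
\[
\chi'_{\rm s}(G)\le 6\chi'(H_1)+14\chi'(H_2)\le 6\cdot\frac{\Delta}{2}+14\left(\frac{\Delta}{2}+1\right)=10\Delta+14,
\]
as required. The one step that demands genuine care is the degree identity $d_G(v)=2d_Q(v)$ extracted from Suzuki's description, namely, that each vertex gets exactly one new diagonal per incident face; once that is in place the bound follows immediately from K\"onig, Vizing, and Theorem~\ref{special}.
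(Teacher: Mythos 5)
Your proof is correct and follows essentially the same route as the paper: decompose $G$ into the non-crossing (bipartite quadrangulation) part $H_1$ and the crossing part $H_2$, use the degree identity $d_{H_1}(v)=d_{H_2}(v)=\tfrac12 d_G(v)$, apply K\"onig to $H_1$ and Vizing to $H_2$, and plug into Theorem~\ref{special}. The only difference is cosmetic: you justify the degree identity via the per-face diagonal count from Suzuki's structure theorem, where the paper simply asserts it.
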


\begin{proof}
\  Let $G$ be an optimal 1-planar graph.
Let $H_1$ and $H_2$ denote the graphs induced by non-crossing edges and crossing edges of $G$, respectively.
Then $G=H_1\cup H_2$, where $H_1$ is a bipartite plane graph. For each vertex $v\in V(G)$, it is easy to see that $d_{H_1}(v)=d_{H_2}(v)=\frac 12 d_G(v)$;
in particular, we have $\Delta(H_1)=\Delta(H_2)=\frac {\Delta}2$.

Since $H_1$ is bipartite, $\chi'(H_1)=\Delta(H_1)=\frac {\Delta}2$ by
Theorem \ref{konig}. By Theorem \ref{edge-coloring-1},
$\chi'(H_2)\le \Delta(H_2)+1=\frac {\Delta}2+1$.
By Theorem \ref{special},
 $\chi'_{\rm s}(G)\le 6\chi'(H_1)+14\chi'(H_2)\le   6 \times \frac {\Delta}2+14\times (\frac {\Delta}2+1)  = 10\Delta+14$.
\end{proof}

\section{Concluding remarks}

In this paper, we show that the strong chromatic index of every 1-planar graph is at most $14\Delta$.
As for the lower bound of strong chromatic index,
Bensmail et al.\,\cite{ben} showed that for each $\Delta\ge 5$, there exist 1-planar graphs with strong chromatic index $6\Delta-12$.
Based on these facts, we put forward the following:

\medskip
\noindent{\bf Question 1.}\ {\em What is the least constant $c_1$ such that every $1$-planar graph $G$ satisfies  $\chi'_{\rm s}(G)\le c_1\Delta\,?$}
\medskip

The foregoing discussion asserts that  $6\le c_1\le 14$. We think that it is very difficult to reduce further the value of $c_1$
by employing the method used in this paper.

This paper also involves the strong edge coloring of some special 1-planar graphs such as IC-planar graphs and optimal 1-planar graphs.
In particular, we show that the strong chromatic index of every IC-planar graph is at most $6\Delta+20$.
For $\Delta\ge 4$, by attaching $\Delta-4$ new pendant vertices to each vertex
of the complete graph $K_5$, we get a graph $H_{\Delta}$.
Since $K_5$ is an IC-planar graph, so is $H_{\Delta}$.
It is easy to inspect that any two edges of $H_{\Delta}$ lie in a path of length 2 or 3. So it follows that $\chi'_{\rm s}(H_{\Delta})=|E(H_{\Delta})|=10+5(\Delta-4)=5\Delta-10$.

\medskip
\noindent{\bf Question 2.}\ {\em What is the least constant $c_2$ such that every IC-planar graph $G$ satisfies  $\chi'_{\rm s}(G)\le c_2\Delta\,?$}
\medskip

Notice that $5\le c_2\le 6$. We conjecture that $c_2=5$.


\end{document}